\theoremstyle{definition}
\newtheorem{definition}{Definition}[section]
\newtheorem{remark}{Remark}[section]
\theoremstyle{plain}
\newtheorem{theorem}{Theorem}[section]
\newtheorem{lemma}{Lemma}[section]
\newtheorem{proposition}{Proposition}[section]
\newtheorem{example}{Example}[section]
\journal{arXiv}
\begin{document}

\begin{frontmatter}

\title{Upper and lower estimates for rate of convergence\\in the Chernoff product formula\\for semigroups of operators}

\author{Oleg E. Galkin and Ivan D. Remizov}

\address{National Research University Higher School of Economics, Russian Federation
	
Laboratory of Dynamical Systems and Applications NRU HSE
	
25/12 Bol. Pecherskaya Ulitsa, Room 412, Nizhny Novgorod, 603155, Russia}

\ead{olegegalkin@ya.ru, ivremizov@yandex.ru}

\begin{abstract}
Chernoff approximations to strongly continuous one-parameter semigroups give solutions to a wide class of differential equations.
This paper studies the rate of convergence of the Chernoff approximations. We provide simple natural examples for which the convergence is arbitrary fast, is arbitrary slow, and holds in the strong operator topology but does not hold in the norm operator topology. We also prove a general theorem that gives an upper estimate for the speed of decay of the norm of the residual term of the Chernoff approximations. The result is applied to one-dimensional parabolic differential equations with variable coefficients. The obtained estimates can be used for the numerical solution of PDEs.\end{abstract}

\begin{keyword}Chernoff product formula \sep approximation of $C_0$-semigroup \sep speed of convergence \sep estimates \sep examples

\textit{MSC 2010:} 47D03; 47D06; 35A35; 41A25

\end{keyword}

\date{2 April 2021, updated 1 November 2021}

\end{frontmatter}

\newpage
\tableofcontents

\section{Introduction}

This paper is devoted to one of the most applicable branch of modern functional analysis, namely to $C_0$-semigroups and their approximations. Three standard textbooks on the topic are \cite{HF, Pazy, EN}; of course, this list is incomplete, but each of these books contains more than enough information on necessary background for the paper. In the paper we do not use any deep results from the $C_0$-semigroup theory and keep all our reasoning very simple and accessible to a broad mathematical audience. We believe that all the text can be understood by everyone who had even an introductive course on functional analysis. 

{\bf Why is this paper interesting and important.} It appeared that our elementary approach allows us to prove the main theorem~\ref{mainth} that develops the result of  the famous Chernoff theorem \cite{Chernoff} on approximations of $C_0$-semigroups. With this new theorem it is possible to find out what one needs to do in order to obtain the so-called fast convergent Chernoff approximations, and what the speed of the convergence can be. These approximations of $C_0$-semigroups provide approximate solutions to the Cauchy problem for a large class of partial differential equations (PDEs), namely linear evolution equations with variable coefficients, such as parabolic or Schr\"odinger equations. This gives a flexible and powerful tool for construction of new numerical methods for solving the Cauchy problem for PDEs. In overview \cite{Butko-2019} one can find many classes of equations for which solution methods based on the Chernoff approximations have been developed, see \cite{OSS2019, ST2020} for most recent applications, see also \cite{R2018, R-2017, RS2018, R-PotAn2020, R5, R2, RemAMC2018, RemJMP}.

Our theorem~\ref{mainth} allows us to prove estimates on the speed of convergence for all of these methods. Moreover, this gives a clue how one can construct Chernoff approximations with faster speed of convergence than other known examples. This is why the results presented in the paper are interesting and important.

{\bf Preliminaries.} Let us recall some relevant notation, definitions, and facts following \cite{EN}.

\begin{definition}\label{semigrdef} Let $\mathcal{F}$ be a Banach space over the field $\mathbb{R}$ or $\mathbb{C}$. Let $\mathscr{L}(\mathcal{F})$ be the set of all bounded linear operators in $\mathcal{F}$. Suppose we have a mapping $V\colon [0,+\infty)\to \mathscr{L}(\mathcal{F}),$ i.e.\ $V(t)$ is a bounded linear operator $V(t)\colon \mathcal{F}\to \mathcal{F}$ for each $t\geq 0.$ The mapping $V$, or equivalently the family $(V(t))_{t\geq 0}$, is called \textit{a strongly continuous one-parameter semigroup of linear bounded operators} (or just \textit{a $C_0$-semigroup}) iff it satisfies the following three conditions: 
	
1) $V(0)$ is the identity operator $I$, i.e. $V(0)\varphi=\varphi$ for each $\varphi\in \mathcal{F}$; 
	
2) $V$ maps the addition of numbers in $[0,+\infty)$ into the composition of operators in $\mathscr{L}(\mathcal{F})$, i.e. for all $t\geq 0$ and all $s\geq 0$ we have $V(t+s)=V(t)\circ V(s),$ where for each $\varphi\in\mathcal{F}$ the notation $(A\circ B)(\varphi)=A(B(\varphi))=AB\varphi$ is used;

3) $V$ is continuous with respect to the strong operator topology in $\mathscr{L}(\mathcal{F})$, i.e. for all $\varphi\in \mathcal{F}$ the function $t\longmapsto V(t)\varphi$, $[0,+\infty)\to \mathcal{F}$  is continuous.
\end{definition}

\begin{remark}
The definition of a \textit{$C_0$-group} $(V(t))_{t\in\mathbb{R}}$ is obtained by the substituting $[0,+\infty)$ with $\mathbb{R}$ in the definition above.
\end{remark}

\begin{definition}\label{defgen}
Let $(V(t))_{t\geq 0}$ be a $C_0$-semigroup in Banach space $\mathcal{F}$. Its \textit{infinitesimal generator} (or just \textit{generator}) is defined as the operator $L\colon D(L)\to\mathcal{F}$ with the domain 
$$
D(L)=\left\{\varphi\in \mathcal{F}: \textrm{there exists a limit }\lim_{t\to +0}\frac{V(t)\varphi-\varphi}{t}\right\} \subset \mathcal{F},
$$ 
and
$$L\varphi=\lim_{t\to +0}\frac{V(t)\varphi-\varphi}{t}.$$ 
%is called \textit{an infinitesimal generator} (or just \textit{generator} to make it shorter) of the $C_0$-semigroup $(V(t))_{t\geq 0}$. 
Very often the notation $V(t)=e^{tL}$ is used.

If $(V(t))_{t\in\mathbb{R}}$ is a $C_0$-group, then its generator $L$ is defined in the same way: 
$$
D(L)=\left\{\varphi\in \mathcal{F}: \textrm{there exists a limit }\lim_{t\to0}\frac{V(t)\varphi-\varphi}{t}\right\} \subset \mathcal{F},
$$
$$
L\varphi=\lim_{t\to 0}\frac{V(t)\varphi-\varphi}{t}.
$$ 
\end{definition}

\begin{remark}\label{generdef}
It is known that for each $C_0$-semigroup $(V(t))_{t\geq 0}$ in Banach space $\mathcal{F}$, the set $D(L)$ is a dense linear subspace of $\mathcal{F}$ \cite{EN}. 
Moreover, $(L,D(L))$ is a closed linear operator that uniquely defines the semigroup $(V(t))_{t\geq 0}$.
Under condition $V(0)=I$ it is clear that $f\in D(L)$ iff the derivative $\frac{d}{dt}V(t)f\big|_{t=0}$ exists, which is the right derivative in case of a $C_0$-semigroup and two-sided (traditional) derivative in case of a $C_0$-group.
\end{remark}

\begin{definition}\label{remdom}
For a linear operator $A\colon D(A)\to\mathcal{F}$ with the domain $D(A)\subset\mathcal{F}$ and all $n=1,2,3,\ldots$ we define the domain $D(A^n)$ of operator $A^n$ as follows: 
$$(f\in D(A^n))\iff (f\in D(A), Af\in D(A), A^2f\in D(A),\dots, A^{n-1}f\in D(A)),$$
which implies $D(A)\supset D(A^2)\supset\dots\supset D(A^n)$. 
\end{definition}

\begin{definition} \label{defCore}
Let $(A,D(A))$ be a linear operator in Banach space $\mathcal{F}$. Linear subspace $H\subset D(A)$ is called {\it a core} of $(A,D(A))$ iff the closure of $(A,D(A))$ is equal to the closure of operator $(A,H)$.
\end{definition}

\begin{remark}\label{remcore}
We recall (proposition 1.8 from \cite{EN}) that if $L$ is the generator of a $C_0$-semigroup on Banach space $\mathcal{F}$, then $\bigcap_{n=1}^\infty D(L^n)$ is dense in $\mathcal{F}$ and is a core for $L$. This implies that $D(L^n)$ is also a core of $L$ and is dense in $\mathcal{F}$ for all $n=1,2,3,\ldots$
\end{remark}

Now we are ready to state the Chernoff's theorem. From several options (see \cite{EN, Chernoff, BS, JL}), we choose the one given in~\cite{BS} (in equivalent formulation):
%=====================================
\begin{theorem}[\textsc{P.\,R.~Chernoff (1968)}, cf.~\cite{EN, Chernoff, BS, JL}]
\label{ChernoffTheor}
Suppose that the following three conditions are met:
\begin{enumerate}
\item %1. 
$C_0$-semigroup $(e^{tL})_{t\ge 0}$ with generator $(L,D(L))$ in Banach space $\mathcal{F}$ is given, 
such that for some $w\geq 0$ the inequality $\|e^{tL}\| \le e^{wt}$ holds 
for all $t\ge0$. 

\item %2. 
There exists a strongly continuous mapping $S\colon[0,+\infty)\to\mathscr{L}(\mathcal{F})$ such that $S(0)=I$ and the inequality $\|S(t)\| \le e^{wt}$ holds for all $t\ge0$.

\item %3. 
There exists a dense linear subspace $D\subset\mathcal{F}$ such that for all $f\in D$ there exists a limit
$S'(0)f := \lim_{t\to +0} (S(t)f-f)/t$.
Moreover, $S'(0)$ on $D$ has a closure that coincides with the generator $(L,D(L))$.
\end{enumerate}
Then the following statement holds:
\begin{enumerate}
\item[(C)] for every $f\in \mathcal{F}$, as $n\to\infty$ we have $S(t/n)^n f \to e^{tL}f$ locally uniformly with respect to $t\ge0$, i.e. for each $T>0$ and each $f\in \mathcal{F}$ we have
$$
\lim_{n\to\infty}\sup_{t\in[0,T]}\|S(t/n)^n f - e^{tL}f\| = 0.
$$
\end{enumerate}
\end{theorem}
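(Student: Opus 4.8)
The plan is to run Chernoff's two-step argument. First one replaces the powers $S(t/n)^n$ by the genuine, norm-continuous one-parameter semigroup $\bigl(e^{sA_h}\bigr)_{s\ge0}$ generated by the bounded operator $A_h:=h^{-1}\bigl(S(h)-I\bigr)$ (with $h=t/n$, so that $n\bigl(S(t/n)-I\bigr)=tA_{t/n}$), the error being controlled by an elementary probabilistic inequality; then one shows these approximating semigroups converge to $\bigl(e^{tL}\bigr)_{t\ge0}$. A preliminary reduction lets us assume $w=0$, i.e. $\|S(t)\|\le1$ and $\|e^{tL}\|\le1$: replacing $S(t),L$ by $\widetilde S(t):=e^{-wt}S(t)$ and $\widetilde L:=L-wI$ one checks $\widetilde S(0)=I$, $\widetilde S'(0)=S'(0)-wI$ on $D$ (so hypothesis~(3) still holds for $\widetilde S,\widetilde L$), that $\widetilde L$ generates $\bigl(e^{-wt}e^{tL}\bigr)$, and that $\widetilde S(t/n)^n=e^{-wt}S(t/n)^n$; hence the conclusion~(C) for $\widetilde S$ is equivalent to that for $S$.

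The probabilistic input is Chernoff's lemma: for every linear contraction $T$ on $\mathcal F$, every $f\in\mathcal F$ and every $n\in\mathbb N$,
$$\bigl\|e^{n(T-I)}f-T^nf\bigr\|\le\sqrt n\,\bigl\|(T-I)f\bigr\|.$$
Its proof is short: expand $e^{n(T-I)}=e^{-n}\sum_{k\ge0}\tfrac{n^k}{k!}T^k$, so that $e^{n(T-I)}f-T^nf=e^{-n}\sum_{k\ge0}\tfrac{n^k}{k!}(T^k-T^n)f$; since $\|T\|\le1$, telescoping gives $\|T^kf-T^nf\|\le|k-n|\,\|(T-I)f\|$, and then the Cauchy--Schwarz inequality together with the identity $e^{-n}\sum_{k\ge0}\tfrac{n^k}{k!}(k-n)^2=n$ (the variance of a Poisson($n$) law) yields the bound. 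Applying this with $T:=S(t/n)$, for fixed $T>0$ and fixed $f\in D$ we obtain
$$\sup_{0\le t\le T}\bigl\|S(t/n)^nf-e^{tA_{t/n}}f\bigr\|\le\sqrt n\,\sup_{0\le t\le T}\bigl\|\bigl(S(t/n)-I\bigr)f\bigr\|;$$
since $\tau^{-1}(S(\tau)f-f)\to S'(0)f$ as $\tau\to+0$, for all large $n$ the right-hand side is at most $\sqrt n\cdot\tfrac Tn\bigl(\|S'(0)f\|+1\bigr)=\tfrac{T}{\sqrt n}\bigl(\|S'(0)f\|+1\bigr)\to0$. Thus the theorem reduces to showing $e^{tA_{t/n}}f\to e^{tL}f$ as $n\to\infty$, uniformly for $t\in[0,T]$.

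For this last reduction one argues as in the Trotter--Kato approximation theorem. Each $A_h$ is bounded, $\|e^{sA_h}\|=e^{-s/h}\bigl\|e^{(s/h)S(h)}\bigr\|\le1$ for all $s\ge0$, and for every $g\in D$ one has $A_hg\to S'(0)g=Lg$ as $h\to+0$ (hypothesis~(3) forces $S'(0)=L$ on $D$, since the closure of $(S'(0),D)$ is $(L,D(L))$). Fixing $\lambda>0$ and writing $R_\lambda:=(\lambda-L)^{-1}$, $R_\lambda^h:=(\lambda-A_h)^{-1}$, both of norm $\le1/\lambda$, the identity $R_\lambda^h(\lambda-L)g=g+R_\lambda^h(A_h-L)g$ (valid for $g\in D\subset D(L)$) shows $R_\lambda^h\phi\to R_\lambda\phi$ for $\phi$ in the dense subspace $(\lambda-L)D$ (dense because $D$ is a core of $L$), hence for all $\phi\in\mathcal F$ by the uniform bound on $\|R_\lambda^h\|$. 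Strong convergence of the resolvents together with the uniform contractivity of the semigroups $\bigl(e^{sA_h}\bigr)$ and $\bigl(e^{sL}\bigr)$ then gives $\sup_{0\le s\le T}\|e^{sA_h}f-e^{sL}f\|\to0$ as $h\to+0$ for every $f\in\mathcal F$; one way to see this is through the exponential formula $e^{sL}f=\lim_{k\to\infty}\bigl((k/s)R_{k/s}\bigr)^kf$ and its analogue for $A_h$, the interchange of limits being justified by the uniform-in-$h$ bound $\|e^{sA_h}\|\le1$. Finally $\sup_{0\le t\le T}\|e^{tA_{t/n}}f-e^{tL}f\|\le\sup_{0<h\le T/n}\ \sup_{0\le s\le T}\|e^{sA_h}f-e^{sL}f\|\to0$.

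Combining the two steps, $S(t/n)^nf\to e^{tL}f$ uniformly on $[0,T]$ for every $f$ in the dense set $D$; since $\|S(t/n)^n\|\le1$ and $\|e^{tL}\|\le1$, an $\varepsilon/3$ argument extends this to all $f\in\mathcal F$, which is~(C). The only genuinely non-formal point is the last displayed deduction of the third paragraph --- upgrading strong convergence of the generators on a core to locally uniform strong convergence of the semigroups (the resolvent-to-semigroup half of Trotter--Kato); convergence of the resolvents themselves is immediate, and everything else is the one-line Poisson computation and routine bookkeeping, so I expect that step to be the main obstacle.
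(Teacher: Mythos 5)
The paper states this theorem without proof, quoting it from the cited literature (Chernoff, Engel--Nagel, Bogachev--Smolyanov), so there is no in-paper argument to compare against. Your proposal is the standard classical proof --- rescaling to the contraction case, the Poisson/$\sqrt{n}$ lemma comparing $T^n$ with $e^{n(T-I)}$, and the resolvent half of the Trotter--Kato approximation theorem to pass from $e^{tA_{t/n}}$ to $e^{tL}$ --- and all the steps you spell out (the telescoping bound, the Cauchy--Schwarz/variance computation, the density of $(\lambda-L)D$ from the core hypothesis, the final $\varepsilon/3$ extension from $D$ to $\mathcal{F}$) are correct. The one step you defer to rather than prove, namely that strong resolvent convergence plus the uniform contraction bounds yields locally uniform strong convergence of the semigroups, is a standard theorem (Engel--Nagel, Trotter--Kato approximation theorem), which is an acceptable level of rigor for reproving a quoted classical result.
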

%=====================================

%=====================================
\begin{definition} \label{defChFun} 
Let $C_0$-semigroup $(e^{tL})_{t\ge 0}$ with generator $L$ in Banach space $\mathcal{F}$ be given. 
The mapping $S\colon [0,+\infty)\to \mathscr{L}(\mathcal{F})$ is called a \textit{Chernoff function for operator $L$} iff it satisfies the
condition (C) of Chernoff theorem~\ref{ChernoffTheor}.
In this case expressions $S(t/n)^n$ are called \emph{Chernoff approximations to the semigroup $e^{tL}$}.
\end{definition}
%=====================================

%%%%%%%%%%%%%%%%%%%%%%%%%%%%%%%%%%%%%%%%%%%%%%%%%%%
{\bf One-dimensional real analog of Chernoff's theorem.} In this subsection we discuss how the Chernoff theorem~\ref{ChernoffTheor} can be understood if Banach space $\mathcal{F}$ is one-dimensional, i.e. $\mathcal{F}=\mathbb{R}$. In this case any linear operator $A\in\mathscr{L}(\mathcal{F})$ is a multiplication by some real number $a$, any $C_0$-semigroup $(e^{tA})_{t\ge 0}$ consists of multiplications by numbers $e^{ta}$, i.e. $(e^{tA})f = e^{ta}\cdot f$ for any $t\ge0$ and any $f\in\mathcal{F}=\mathbb{R}$. Thus, theorem~\ref{ChernoffTheor} can be reformulated as follows:
%=====================================
\begin{theorem}\label{ChernoffTheor1dim}
Suppose there exists a function $s\colon[0,+\infty)\to\mathbb{R}$ such that $s(0)=1$ and the following conditions are met:
\begin{enumerate}
\item %1. 
function $s(t)$ is continuous and for some $w\ge0$ the inequality $|s(t)|\le e^{wt}$ holds for all $t\ge0$;
\item %2. 
there exists a right-side derivative 
$a = s'(0) := \lim_{t\to +0} (s(t)-1)/t$.
\end{enumerate}
Then $s(t/n)^n \to e^{ta}$ as $n\to\infty$ locally uniformly with respect to $t\ge0$, i.e. for each $T>0$ we have
$$
\lim_{n\to\infty}\sup_{t\in[0,T]}|s(t/n)^n - e^{ta}| = 0.
$$
\end{theorem}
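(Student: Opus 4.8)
The plan is to reduce everything to a scalar estimate and then use uniform control on a compact $t$-interval.

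First I would record the consequence of the hypotheses at $t=0$: since $s(t) = 1 + at + o(t)$ as $t\to+0$, for any $T>0$ the quantity $s(t/n)$ is close to $1$ uniformly in $t\in[0,T]$ once $n$ is large, and more precisely $n\log s(t/n) \to ta$ pointwise. The cleanest route is to show $\sup_{t\in[0,T]}\bigl|n\log s(t/n) - ta\bigr| \to 0$ and then exponentiate, using continuity of $\exp$ together with the uniform bound $|s(t/n)^n|\le e^{wt}\le e^{wT}$ to turn the logarithmic convergence into the claimed convergence of $s(t/n)^n$ to $e^{ta}$.

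To make the logarithm step rigorous I would argue as follows. Fix $T>0$. From $a = \lim_{t\to+0}(s(t)-1)/t$ and $s(0)=1$, choose $\delta>0$ so that $|s(\tau)-1|\le 1/2$ and $\bigl|(s(\tau)-1)/\tau - a\bigr|$ is as small as we like for $0<\tau\le\delta$; this controls $\tau\mapsto s(\tau)$ on $[0,\delta]$. For $n$ large enough that $T/n\le\delta$, every argument $t/n$ with $t\in[0,T]$ lies in $[0,\delta]$, so $s(t/n)$ stays in a fixed neighbourhood of $1$ where $\log$ is well-defined and smooth. Writing $u = s(t/n)-1$, I would use $\log(1+u) = u + O(u^2)$ (valid for $|u|\le 1/2$ with an absolute constant) to get
$$
n\log s(t/n) = n\,u + n\,O(u^2) = t\cdot\frac{s(t/n)-1}{t/n} + O\!\Bigl(\frac{t^2}{n}\Bigr),
$$
and since $\bigl|(s(t/n)-1)/(t/n) - a\bigr|\to 0$ uniformly in $t\in[0,T]$ (it only depends on the argument $t/n\in[0,\delta]$, which we can push to $0$) and $t^2/n\le T^2/n\to 0$, the whole expression converges to $ta$ uniformly on $[0,T]$.

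The main obstacle, and the only genuinely delicate point, is the passage from uniform convergence of $n\log s(t/n)$ to $ta$ back to uniform convergence of $s(t/n)^n$ to $e^{ta}$: the exponential is not uniformly continuous on $\mathbb{R}$, so this needs the a priori bound $|s(t/n)^n| = |s(t/n)|^n \le e^{wt}\le e^{wT}$ from condition~1, which confines all the quantities to a compact set $|z|\le e^{wT}$ on which $\exp$ is uniformly continuous; combined with $|e^{ta}|\le e^{|a|T}$, this closes the argument. I would also need to handle the trivial edge case where some $s(t/n)$ could in principle be negative or zero, but the choice $|s(t/n)-1|\le 1/2$ forbids this, so the real logarithm is legitimate throughout. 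Everything else is a routine $\varepsilon$-chase.
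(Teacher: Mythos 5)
Your proof is correct, but it is not the route the paper takes: the paper does not prove Theorem~\ref{ChernoffTheor1dim} directly at all, it simply presents it as the specialization of the general Chernoff theorem~\ref{ChernoffTheor} to the case $\mathcal{F}=\mathbb{R}$, and then remarks (via $\lim_{n\to\infty}(1+ta/n)^n=e^{ta}$) that the first hypothesis is redundant. You instead give a self-contained elementary argument: from $s(\tau)=1+a\tau+o(\tau)$ you get $\sup_{0<\tau\le T/n}|(s(\tau)-1)/\tau-a|\to0$, hence $n(s(t/n)-1)\to ta$ uniformly on $[0,T]$ and $n\,O((s(t/n)-1)^2)=O(T^2/n)\to0$, so $n\log s(t/n)\to ta$ uniformly, and exponentiation finishes. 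This buys a proof that needs no semigroup machinery and that makes the redundancy of condition~1 transparent, since you never use the bound $|s(t)|\le e^{wt}$ in an essential way. One small imprecision: in the final step you invoke the bound $|s(t/n)^n|\le e^{wT}$ to get a compact set $|z|\le e^{wT}$ on which $\exp$ is uniformly continuous, but uniform continuity of $\exp$ must be applied on the \emph{domain} side, i.e.\ you need the arguments $n\log s(t/n)$ and $ta$ to lie in a common compact interval. The upper bound $n\log s(t/n)\le wT$ does follow from condition~1, but the needed two-sided bound already follows from the uniform convergence $n\log s(t/n)\to ta$ together with $|ta|\le|a|T$ (so for large $n$ all arguments lie in $[-|a|T-1,\,|a|T+1]$, where $\exp$ is Lipschitz). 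With that rephrasing the argument is complete; it is a genuine gapless proof, just organized differently from the paper's derivation-by-specialization.
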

%=====================================
Using the formula $\lim_{n\to\infty}(1+ta/n)^n=e^{ta}$, which is a statement from simple calculus, we can see that the first condition in the theorem~\ref{ChernoffTheor1dim} is redundant.
This is how we get the following short statement, which we call \emph{one-dimensional real analog of Chernoff's theorem}:
\begin{multline}\label{firsteq}
\Big(s\colon[0,+\infty)\to\mathbb{R}, s(0)=1, s'(0)=a\Big)\\
\Longrightarrow s(t/n)^n=e^{ta}+o(1)\textrm{ as }n\to\infty\textrm{ for all }t\ge0.
\end{multline}

%%%%%%%%%%%%%%%%%%%%%%%%%%%%%%%%%%%%%%%%
{\bf The idea of the main result of the paper.} Let us consider for $\mathcal{F}=\mathbb{R}$ and fixed $m=1,2,3,\dots$ a more profound version of~(\ref{firsteq}):
\begin{multline}\label{seqeq}
\Big(s\colon[0,+\infty)\to\mathbb{R}, s(t) = \sum_{k=0}^m \frac{a^kt^k}{k!} + o(t^m)\textrm{ as }t\to0\Big) \\\Longrightarrow s(t/n)^n=e^{ta}+o(1/n^{m-1}) \textrm{ as }n\to\infty\textrm{ for all }t\ge0. \end{multline}
Statement (\ref{seqeq}) is similar to (\ref{firsteq}), but (\ref{seqeq}) is not so elementary even in one-dimensional case. The idea of (\ref{seqeq}) is the following: if one wants to approximate the exponent $e^{ta}$ using the fomula $e^{ta}=\lim_{n\to\infty}s(t/n)^n$, then the highest speed of convergence will be achieved if $s(t)=e^{ta}$. So good functions $s(t)$ should be close to the exponent $e^{ta}$ in some sense. In what sense? The answer is: in the infinitesimal sense, when $s(t)$ and $e^{ta}$ have the same Taylor polynomial. Higher  degree $m$ of the polynomial should provide higher speed $o(1/n^{m-1})$ of approximation. Statement~(\ref{seqeq}) is a one-dimensional version of our main theorem~\ref{mainth} and contains its main idea. Of course, the theorem~\ref{mainth} covers non-trivial cases, such as  $\mathrm{dim}\mathcal{F}=\infty$ and $\|L\|=\infty$.

{\bf Semigroups and linear evolution equations.} 
It is a well known fact~\cite{EN} that the solution of a well-posed Cauchy problem for a linear evolution partial differential equation (such as: Sch\"odinger-type equations, heat equation, parabolic equations) is given by a strongly continuous semigroup of linear bounded operators whose infinitesimal generator is a (usually unbounded) linear operator from the right-hand side of the evolution equation. Let us explain this in more detail. Let $X$ be an infinite set, and $\mathcal{F}$ be a Banach space of (not necessarily all) number-valued functions on $X$, and let $L$ be a closed linear operator $L\colon D(L)\to\mathcal{F}$ with the domain $D(L)\subset\mathcal{F}$ dense in $\mathcal{F}$. We consider the Cauchy problem for the evolution equation
\begin{equation}\label{CauchyP}
\left\{ \begin{array}{ll}
u'_t(t,x)=Lu(t,x),\\
u(0,x)=u_0(x),
\end{array} \right.
\end{equation}
where $x\in X$, $u_0\in\mathcal{F}$, $u(t,\cdot)\in \mathcal{F}$ for all $t\geq 0$.
Operator $L$ can be, in a trivial case, the Laplace operator $\Delta$ (so $u'_t=Lu$ is the heat equation), or (in a nontrivial case) a more sophisticated linear differential operator with variable coefficients that do not depend on $t$ but depend (usually nonlinearly) on $x$. It is known \cite{EN} that, in case the $C_0$-semigroup $\left(e^{tL}\right)_{t\geq 0}$ exists and has the generator $(L, D(L))$, the solution to Cauchy problem (\ref{CauchyP}) exists and is given by the equality $u(t,x)=(e^{tL}u_0)(x)$ for all $t\geq 0$ and $x\in X$. If  $u_0\in D(L)$, then $u(t,\cdot)\in D(L)$ for all $t\geq 0$ and the solution $u$ is a classical solution (in the terminology of \cite{EN}). And for arbitrary $u_0\in\mathcal{F}$ the solution of Cauchy problem~\eqref{CauchyP} exists  as a mild solution (in the terminology of \cite{EN}), i.e. the solution of the corresponding integral equation $u(t,\cdot)=L\int_0^tu(s,\cdot)ds + u_0$.

The equality $u(t,x)=(e^{tL}u_0)(x)$ for the solution of the Cauchy problem~(\ref{CauchyP}) shows that finding the semigroup  $\left(e^{tL}\right)_{t\geq 0}$ is a hard problem because it is equivalent to solving the Cauchy problem (\ref{CauchyP}) for each $u_0\in\mathcal{F}$. However, if a Chernoff function $S$ for operator $L$ is constructed (see definition~\ref{defChFun}), then the semigroup is given by the equality $e^{tL}=\lim_{n\to\infty}S(t/n)^n$. An advantage of this approach to solving~\eqref{CauchyP} arises from the fact that usually it is possible to define $S$ by an explicit and not very long formula which contains coefficients of operator $L$. This gives approximations to the solution of the Cauchy problem (\ref{CauchyP}) converging towards the solution in $\mathcal{F}$ as $n\to\infty$. Expressions $S(t/n)^nu_0$ are called \emph{Chernoff approximations} to the solution of the Cauchy problem~(\ref{CauchyP}).

{\bf What is new compared with the best known results in the field.} 
The Chernoff theorem has a long list of applications, but a short list of generalizations and developments because it is difficult to obtain them. Original Chernoff's proof and its variants given in all textbooks known to us are difficult to generalize. To our current knowledge all contributions to ''theory of rates of convergence in Chernoff's theorem'' can be found in \cite{GSK2019, Zag2020} and references therein. There are also few ``practical'' research papers \cite{OSS2012, Prud} that measure the speed of convergence in particular cases obtained via numerical simulations. In the present paper we propose a completely new approach that allows for simpler proofs and more general results. Let us note that if $S(t)$ is a Chernoff function for operator $L$, then the speed of convergence of $S(t/n)^nf$ to $e^{tL}f$ depends both on $S(t)$ and $f\in\mathcal{F}$, even if $\|f\|=1$. Not all Chernoff functions $S(t)$ and vectors $f$ provide high speed of approximation as our examples show (see section~\ref{ExOfASAFC}). This is a very important and commonly not noticed fact: for example in~\cite{Zag2020} estimates in norm operator topology in the space $\mathscr{L}(\mathcal{F})$ are considered hence dependence on direction of $f$ is out of the scope of~\cite{Zag2020}, meanwhile \cite{Zag2020} is probably one of the best recent papers on the topic. In another bright paper~\cite{GSK2019} dependence on $f$ is taken into account but our theorem~\ref{mainth} is much more general because it works for arbitrary $k=1,2,3,\dots$ in~\eqref{seqeq} and has the form that is very suitable for practical use. The present paper is a continuation of our research \cite{VVGKR, GR-2021}.

{\bf Applications.}  In the last section of the paper we provide (with full proofs) an example of application of theorem~\ref{mainth}, which itself is helpful but can also be used as a template for further applications. As far as we know this is the first example of a rigorous estimation of the speed of convergence for Chernoff approximations for solution to the Cauchy problem for a concrete class of equations (second order parabolic equations with variable coefficients), see theorem~\ref{teorApprDU2} and  example~\ref{exremizf}. In \cite{Butko-2019} one can find many classes of equations for which solution methods based on the Chernoff approximations are developed, so we expect many cases for application of our main theorem \ref{mainth}.
One very simple example (rapidly converging Chernoff approximations for solutions to the heat equation) can be found in \cite{VVGKR}. 

{\bf Finally, what this paper is about.}  This paper is devoted to the study of the speed of vanishing of the norm of the difference between semigroup $e^{tL}$ and its Chernoff approximation $S(t/n)^n$. We estimate $\|e^{tL}f-S(t/n)^nf\|$ for a fixed $f\in\mathcal{F}$ and all large enough $n$. The main result of the paper is the theorem~\ref{mainth}. When we say that arbitrary Banach space $\mathcal{F}$ is given, we assume it to be over fields $\mathbb{R}$ or $\mathbb{C}$, all the statements in this setting are true for both cases.

\section{Examples of arbitrary slow and arbitrary fast convergence}
\label{ExOfASAFC}

Let us first provide examples of arbitrary fast and arbitrary slow convergence. We proposed our first examples of such kind in \cite{VVGKR}, and now we develop them. 

The following fact should be well known, but a clear short proof is better than a reference. The $C_0$-(semi)group of translations will be basic for (counter)examples provided in this section. 

\begin{lemma}[On the group of translations]\label{translemma} 
Consider the linear space $\mathcal{F}=UC_b(\mathbb{R})$ of all uniformly continuous bounded functions $f\colon\mathbb{R}\to\mathbb{R}$ with the uniform norm $\|f\|=\sup_{x\in\mathbb{R}}|f(x)|$ which makes $UC_b(\mathbb{R})$ a Banach space. 
Define $(Q(t)f)(x)=f(x+t)$ for all $t,x\in\mathbb{R}$ and all $f\in UC_b(\mathbb{R})$.
Then:

\hangindent=3em
1. $(Q(t))_{t\in\mathbb{R}}$ is a $C_0$-group in $UC_b(\mathbb{R})$.

%with generator $(L,D(L))$, and $(Q(t))_{t\geq 0}$ is a $C_0$-semigroup in $UC_b(\mathbb{R})$
%with the same generator $(L,D(L))$.  

\hangindent=3em
2. The generator $(L,D(L))$ of the $C_0$-group $(Q(t))_{t\in\mathbb{R}}$ is given by $L=[f\mapsto f']$, i.e.  $(Lf)(x)=f'(x)$ and $$D(L)=UC_b^1(\mathbb{R})\stackrel{define}{=}\{f|f,f'\in UC_b(\mathbb{R})\}.$$ 
From now let us use notation $Q(t)=e^{tL}$.

\hangindent=3em
3. $(Q(t))_{t\ge0}$ is a $C_0$-semigroup in $UC_b(\mathbb{R})$ with the same generator $(L,D(L))$.

\hangindent=3em
4. $D(L^n)=UC_b^n(\mathbb{R})\stackrel{define}{=}\{f|f,f',\dots,f^{(n)}\in UC_b(\mathbb{R})\}.$

\hangindent=3em
5. Operator $(f\mapsto f', UC_b^1(\mathbb{R}))$ is closed in $UC_b(\mathbb{R})$.

\hangindent=3em
6. Each of the spaces $UC_b^n(\mathbb{R})$ is dense in $UC_b(\mathbb{R})$ and is a core for $(f\mapsto f', UC_b^1(\mathbb{R}))$.

\hangindent=3em
7. $\|e^{tL}\|=1$ for all $t\in\mathbb{R}$.
\end{lemma}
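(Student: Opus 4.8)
The plan is to establish the seven items essentially in the stated order, each one building on the previous ones and on the general facts about $C_0$-semigroups recalled in the preliminaries. First, for item~1 I would verify the three axioms of Definition~\ref{semigrdef} directly. That $Q(t)$ maps $UC_b(\mathbb{R})$ into itself and is linear and bounded is immediate, since $x\mapsto f(x+t)$ has the same supremum and the same modulus of uniform continuity as $f$; in particular $\|Q(t)f\|=\|f\|$, which already yields item~7. The identity $Q(0)=I$ is trivial and $Q(t+s)f=Q(t)Q(s)f$ is the substitution $x\mapsto x+s+t$. The only genuinely nontrivial point is strong continuity: for fixed $f$ one has $\|Q(t)f-f\|=\sup_x|f(x+t)-f(x)|$, which tends to $0$ as $t\to0$ precisely because $f$ is \emph{uniformly} continuous, and continuity at an arbitrary $t_0$ then follows from the group law together with $\|Q(t_0)\|=1$.

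Next, for item~2 I would compute the generator by hand. If $f\in UC_b^1(\mathbb{R})$, then
$$\frac{(Q(t)f)(x)-f(x)}{t}-f'(x)=\frac{1}{t}\int_0^t\big(f'(x+s)-f'(x)\big)\,ds,$$
so the left-hand side is bounded in norm by $\sup_{|s|\le|t|}\|Q(s)f'-f'\|\to0$ by uniform continuity of $f'$; hence $f\in D(L)$ and $Lf=f'$. Conversely, if $f\in D(L)$ the difference quotients converge in norm, hence pointwise, so for every $x$ the two-sided derivative $f'(x)$ exists and equals $(Lf)(x)$; since $Lf\in\mathcal{F}=UC_b(\mathbb{R})$ this forces $f\in UC_b^1(\mathbb{R})$. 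Item~3 is the restriction of item~1 to $t\ge0$; to see that the generator is unchanged, the only thing to check is that here a $C_0$-semigroup right derivative at $0$ is automatically two-sided, which follows from the identity $\frac{Q(-t)f-f}{-t}=Q(-t)\,\frac{Q(t)f-f}{t}$ combined with $\|Q(-t)\|=1$ and strong continuity, upon letting $t\to+0$.

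The remaining items are then quick. Item~4 is an induction on $n$ using Definition~\ref{remdom} and item~2: for $n\ge2$, $f\in D(L^{n})$ iff $f\in UC_b^1(\mathbb{R})$ and $f'=Lf\in D(L^{n-1})=UC_b^{n-1}(\mathbb{R})$, with the base case $n=1$ being item~2. Item~5 follows because $L$ is the generator of a $C_0$-semigroup and hence closed (Remark~\ref{generdef}); alternatively one argues directly, passing to the limit in $f_k(x)=f_k(0)+\int_0^x f_k'$ when $f_k\to f$ and $f_k'\to g$ uniformly, to get $f(x)=f(0)+\int_0^x g$ and thus $f'=g\in UC_b(\mathbb{R})$. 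Item~6 is then exactly Remark~\ref{remcore} applied to the generator $L$, since $D(L^n)=UC_b^n(\mathbb{R})$ by item~4.

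I do not expect a real obstacle here; the one place that needs care is item~2 (and the generator claim in item~3), where one must not confuse norm convergence of the difference quotients with mere pointwise differentiability. The identification of the domain as \emph{exactly} $UC_b^1(\mathbb{R})$ rests on the fact that $Lf$ is uniformly continuous, which is automatic only because $Lf$ is required to lie in the ambient space $\mathcal{F}$; and the coincidence of the semigroup and group generators hinges on the little trick with $Q(-t)$ above.
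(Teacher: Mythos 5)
Your proof is correct and follows the same overall architecture as the paper's (verify the group axioms with strong continuity coming from uniform continuity, identify the generator by proving both inclusions, induct for $D(L^n)$, then handle closedness, density/core, and the isometry property). The differences are in how three individual steps are executed, and each of your variants is sound. For the inclusion $UC_b^1(\mathbb{R})\subset D(L)$ the paper argues by contradiction via Lagrange's mean value theorem (producing points $\xi_\delta$, $x_\delta$ with $|f'(\xi_\delta)-f'(x_\delta)|\ge\varepsilon_0$ arbitrarily close together, contradicting uniform continuity of $f'$), whereas you write the difference quotient minus $f'(x)$ as $\frac1t\int_0^t\bigl(f'(x+s)-f'(x)\bigr)\,ds$ and bound it by $\sup_{|s|\le|t|}\|Q(s)f'-f'\|$; your route is arguably cleaner since it reuses the already-established strong continuity of $Q$ applied to $f'$ and avoids the indirect argument. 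For item~3 the paper simply cites a remark in Engel--Nagel, while your identity $\frac{Q(-t)f-f}{-t}=Q(-t)\,\frac{Q(t)f-f}{t}$ (together with $\|Q(-t)\|=1$ and strong continuity) gives a self-contained proof that the right derivative at $0$ is automatically two-sided, which is exactly the point that needs checking and is worth making explicit. For item~6 you invoke the general fact (Remark~\ref{remcore}) that $D(L^n)$ is dense and a core for any generator $L$, while the paper argues concretely via the density of $C_b^\infty(\mathbb{R})$ in $UC_b(\mathbb{R})$ and a repeat of the closedness argument; your shortcut is legitimate given item~4, though the paper's concrete argument is more elementary in that it does not lean on the abstract core proposition from semigroup theory.
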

\begin{proof}
1. Conditions $Q(0)f=f$ and $Q(t_1)Q(t_2)f=Q(t_1+t_2)f$ follow directly from the formula $(Q(t)f)(x)=f(x+t)$. Condition $\lim_{t\to 0}\|Q(t)f-f\|=0$ for each $f\in UC_b(\mathbb{R})$ follows from the fact that $f$ is uniformly continuous. Indeed, for a fixed $f\in UC_b(\mathbb{R})$ and $\varepsilon>0$ there exists $\delta>0$ such that inequality $|t|<\delta$ implies $|f(x+t)-f(x)|<\varepsilon$ for all $x\in\mathbb{R}$, so $\|Q(t)f-f\|=\sup_{x\in\mathbb{R}}|f(x+t)-f(x)|\leq\varepsilon$ for all $|t|<\delta$. 

2a. Let us prove that $D(L)\subset UC_b^1(\mathbb{R})$. Suppose $f\in D(L)\subset UC_b(\mathbb{R})$, then by definition~\ref{defgen} of a generator we have 
$0=\lim_{t\to 0}\|(e^{tL}f-f)/t-Lf\|=\lim_{t\to 0}\sup_{x\in\mathbb{R}}|(f(x+t)-f(x))/t-(Lf)(x)|$, so $\frac{1}{t}(f(x+t)-f(x))\to (Lf)(x)$ uniformly (hence pointwise) as $t\to 0$. Pointwise convergence implies that at each $x\in\mathbb{R}$ function $f$ is differentiable and $f'(x)=(Lf)(x)$. So $f'=Lf\in UC_b(\mathbb{R})$. We have thus proved that $f,f'\in UC_b(\mathbb{R})$, hence $f\in UC_b^1(\mathbb{R})$.

2b. Let us now prove that $UC_b^1(\mathbb{R})\subset D(L)$. To do that we need to take $f\in UC_b^1(\mathbb{R})$ and prove that $\frac{1}{t}(f(x+t)-f(x))\to f'(x)$ uniformly in $x\in\mathbb{R}$ as $t\to 0$. Let us prove by contradiction: suppose there exists such $\varepsilon_0>0$ that for each $\delta>0$ there exist such $t_\delta\in(-\delta,\delta)$ and such $x_\delta\in\mathbb{R}$ that $|\frac{1}{t_\delta}(f(x_\delta+t_\delta)-f(x_\delta)) - f'(x_\delta)|\geq \varepsilon_0$. As $f'$ exists, by Lagrange's theorem there exists $\xi_\delta\in(x_\delta,x_\delta+t_\delta)$ such that $\frac{1}{t_\delta}(f(x_\delta+t_\delta)-f(x_\delta))=f'(\xi_\delta)$, so $|f'(\xi_\delta) - f'(x_\delta)|\geq \varepsilon_0$. But $|\xi_\delta-x_\delta|<|t_\delta|<\delta$ and $\delta>0$ is arbitrary so we have a contradiction with the fact that $f'$ is uniformly continuous. Then $f\in D(L)$.

3. See remark after definition of a generator of $C_0$-group in~\cite[p.~79]{EN}.

4. Directly follows from item 2 and definition~\ref{remdom}.

5. Assume that $f_n\in UC_b^1(\mathbb{R})$ for all $n=1,2,3,\ldots$ and $f_n\to f$, assume that there exists $g\in UC_b(\mathbb{R})$ such that $f'_n\to g$. We need to prove that $f\in UC_b^1(\mathbb{R})$ and $f'=g$. This all follows from theorems of calculus on differentiation under the limit sign. Indeed, if $f_n$ converges to $f$ uniformly, and $f_n'$ converges to $g$ uniformly then $g$ is differentiable and $f'=g$. The condition $f\in UC_b^1(\mathbb{R})$ follows from the fact that $f\in UC_b(\mathbb{R})$ and $f'=g\in UC_b(\mathbb{R})$. 

6. Consider the space $C_b^\infty(\mathbb{R})$ of functions bounded with all derivatives.
Then $C_b^\infty(\mathbb{R})$ is dense in $UC_b(\mathbb{R})$ due to lemma 1 in \cite{RemJMP}. Also we have $C_b^\infty(\mathbb{R})\subset UC_b^n(\mathbb{R})$.
So $UC_b^n(\mathbb{R})$ is dense in $UC_b(\mathbb{R})$.
The fact that the closure of $(f\mapsto f', UC_b^n(\mathbb{R}))$ is $(f\mapsto f', UC_b^1(\mathbb{R}))$ is shown exactly by the reasoning that we used in the proof of item 5. 

7. We have $\|e^{tL}f\|=\sup_{x\in\mathbb{R}}|f(x+t)|=\sup_{x\in\mathbb{R}}|f(x)|=\|f\|$ so $\|e^{tL}\|=1$ for all $t\in\mathbb{R}$.

\end{proof}

Later we will work with the notion of modulus of continuity. For fixing notation and details we recall (following \cite{Dz}, pp. 168-174) the definition and some simple facts concerning this notion. 

\begin{definition}
For a uniformly continuous function $f\colon \mathbb{R}\to\mathbb{R}$ its modulus of continuity is a function $\omega_f\colon [0,+\infty)\to [0,+\infty)$ defined by the equality
$$
\omega_f(x)=\sup_{|x_1-x_2|\leq x}|f(x_1)-f(x_2)|.
$$
\end{definition}

\begin{proposition}\label{prop4cond}
1. Function $m\colon [0,+\infty)\to [0,+\infty)$ is a modulus of continuity for some uniformly continuous function $f\colon \mathbb{R}\to\mathbb{R}$ iff the following conditions i)-iv) hold:

i) $m(0)=0$;

ii) $m$ is non-decreasing: $x_1>x_2$ implies $m(x_1)\geq m(x_2)$;

iii) $m$ is continuous;

iv) $m$ is semiadditive in the sense that for all $x_1\geq 0$, $x_2\geq 0$ we have $m(x_1+x_2)\leq m(x_1)+m(x_2)$.

2. If i)-iv) hold, then $m$ is a modulus of continuity for itself, i.e. if we set $f(x)=m(x)$ for $x\geq 0$ and $f(x)=0$ for $x<0$, then $\omega_f(x)=m(x)$ for all $x\geq 0$.
\end{proposition}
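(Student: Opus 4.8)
The plan is to first note that part 2 immediately gives the ``if'' direction of part 1: the function $f$ constructed there is uniformly continuous and satisfies $\omega_f=m$, so any $m$ obeying i)--iv) is a modulus of continuity of some uniformly continuous function. It therefore remains to prove the ``only if'' direction of part 1 and all of part 2. For necessity of i)--iv), property i) is immediate since $|f(x_1)-f(x_2)|=0$ when $x_1=x_2$; property ii) holds because for $x_1>x_2$ the supremum defining $\omega_f(x_1)$ ranges over a larger set of pairs than the one defining $\omega_f(x_2)$; and for iv), given $a\le b$ with $b-a\le x_1+x_2$, I would insert the intermediate point $c=\min(a+x_1,b)$, which satisfies $c-a\le x_1$ and $b-c\le x_2$, so that $|f(a)-f(b)|\le|f(a)-f(c)|+|f(c)-f(b)|\le\omega_f(x_1)+\omega_f(x_2)$, and take the supremum over admissible $a,b$.

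The step requiring slightly more thought is iii). Uniform continuity of $f$ says precisely that $\omega_f(x)\to0$ as $x\to+0$, i.e.\ $\omega_f$ is continuous at $0$ (and $\omega_f(0)=0$ by i)). To upgrade this to continuity on all of $[0,+\infty)$ I would use the inequality that simply repackages subadditivity together with monotonicity: whenever $0\le u\le v$,
$$0\le m(v)-m(u)=m\big(u+(v-u)\big)-m(u)\le m(v-u),$$
so letting $v-u\to+0$ from either side forces $m$ to be both right- and left-continuous at every point. I expect this ``continuity at $0$ plus subadditivity $\Rightarrow$ continuity everywhere'' passage to be the main obstacle, although it is still elementary; the rest is routine case analysis.

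For part 2, put $f(x)=m(x)$ for $x\ge0$ and $f(x)=0$ for $x<0$. I would first establish $|f(x_1)-f(x_2)|\le m(|x_1-x_2|)$ for all real $x_1,x_2$ by three cases: for $x_1,x_2\ge0$ this is exactly the displayed inequality above; for $x_1,x_2<0$ the left-hand side is $0$; and for $x_1<0\le x_2$ it reads $m(x_2)\le m(x_2-x_1)$, which holds by ii) since $x_2\le x_2-x_1=|x_1-x_2|$. Continuity of $m$ at $0$ with $m(0)=0$ then makes $f$ uniformly continuous, and the same estimate yields $\omega_f(x)\le m(x)$ for all $x\ge0$. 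The reverse inequality follows from the single admissible pair $x_1=0$, $x_2=x$: here $|x_1-x_2|\le x$ and $|f(0)-f(x)|=|m(0)-m(x)|=m(x)$, hence $\omega_f(x)\ge m(x)$. Therefore $\omega_f=m$, which proves part 2 and, as observed, closes the ``if'' direction of part 1.
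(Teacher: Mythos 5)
Your proof is correct and complete: the necessity of i), ii), iv) via the intermediate point $c=\min(a+x_1,b)$, the upgrade from continuity at $0$ to continuity everywhere via $0\le m(v)-m(u)\le m(v-u)$, and the verification $\omega_f=m$ for the extension $f=m\cdot\mathbf{1}_{[0,+\infty)}$ are all standard and carried out without gaps. There is nothing in the paper to compare against: the authors state this proposition without proof, simply recalling it from Dzyadyk--Shevchuk (pp.~168--174), and your argument is essentially the textbook one.
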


\begin{proposition}\label{propnincr}
%ог при х=0?
If $m\colon [0,+\infty)\to [0,+\infty)$ and function $x\longmapsto \frac{m(x)}{x}$ is non-increasing for $x>0$, then $m$ is semiadditive, i.e. condition iv) of the proposition \ref{prop4cond} holds.
\end{proposition}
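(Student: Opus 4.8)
The plan is to exploit the monotonicity hypothesis directly: comparing each of $x_1$ and $x_2$ separately with their sum $s=x_1+x_2$ produces two lower bounds, one for $m(x_1)$ and one for $m(x_2)$, whose sum is exactly $m(x_1+x_2)$. No auxiliary constructions are needed.

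First I would dispose of the degenerate case in which one of the arguments vanishes. If, say, $x_2=0$, then the claimed inequality $m(x_1+x_2)\le m(x_1)+m(x_2)$ reads $m(x_1)\le m(x_1)+m(0)$, which holds because $m$ is assumed to take values in $[0,+\infty)$, so $m(0)\ge 0$; the case $x_1=0$ is symmetric. This is the only step that uses the sign restriction on the codomain of $m$.

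Now suppose $x_1>0$ and $x_2>0$, and put $s=x_1+x_2>0$. Since $0<x_1\le s$ and the function $x\mapsto m(x)/x$ is non-increasing on $(0,+\infty)$, we obtain $m(x_1)/x_1\ge m(s)/s$, i.e. $m(x_1)\ge (x_1/s)\,m(s)$. Applying the same inequality with $x_2$ in place of $x_1$ gives $m(x_2)\ge (x_2/s)\,m(s)$. Adding the two yields $m(x_1)+m(x_2)\ge \big((x_1+x_2)/s\big)m(s)=m(s)=m(x_1+x_2)$, which is precisely condition iv) of Proposition~\ref{prop4cond}.

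There is no real obstacle here: the core computation is a two-line application of the hypothesis, and the only point demanding a moment's care is the boundary case treated above, where the expression $m(x)/x$ is undefined and one falls back on $m(0)\ge 0$.
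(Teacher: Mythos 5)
Your proof is correct and complete: the two-sided comparison $m(x_i)/x_i \ge m(s)/s$ for $s=x_1+x_2$ is the standard argument for this fact, and you rightly isolate the boundary case where one argument vanishes, which only needs $m(0)\ge 0$. The paper itself states Proposition~\ref{propnincr} without proof (it is recalled from the literature on moduli of continuity), so there is nothing to compare against; your argument fills that gap in exactly the expected way.
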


\begin{remark}
If for function $f\colon\mathbb{R}\to\mathbb{R}$ we have $\omega_f(h)=o(h)$ as $h\to+0$, then $f(x)\equiv\mathrm{const}$. Indeed, for each $x\in\mathbb{R}$ and $h\neq 0$ we have 
$$
0\leq |f(x+h)-f(x)|\leq \sup_{|x_1-x_2|\leq|h|}|f(x_1)-f(x_2)|=\omega_f(|h|),$$
$$
0\leq \frac{|f(x+h)-f(x)|}{|h|}\leq \frac{\omega_f(|h|)}{|h|},
$$
so $\lim\limits_{h\to 0}\left|\frac{f(x+h)-f(x)}{h}\right|=0$ hence $\lim\limits_{h\to 0}\frac{f(x+h)-f(x)}{h}=0$ i.e.  $f'(x)= 0$ for each $x\in\mathbb{R}$  hence $f(x)\equiv\mathrm{const}$. This is why for a non-constant uniformly continuous function $f$ such cases as $\omega_f(h)=\sqrt{h}$ and $\omega_f(h)=2h$ are possible but such case as  $\omega_f(h)=2h\sqrt{h}$ is not possible.
\end{remark}

Now let us provide a family of Chernoff functions for the (semi)group of translations. Function $v$ serves as a parameter in this family and determines the speed of convergence of the Chernoff approximations.

\begin{theorem}\label{thlow1}
Consider $\mathcal{F}=UC_b(\mathbb{R})$ -- the space of all uniformly continuous bounded functions $f\colon\mathbb{R}\to\mathbb{R}$ with the uniform norm $\|f\|=\sup_{x\in\mathbb{R}}|f(x)|$. 
Consider the group of translations $(e^{tL}f)(x)=f(x+t)$ in $UC_b(\mathbb{R})$ described in lemma \ref{translemma}. Suppose that  function $v\colon (0,+\infty)\to[0,+\infty)$ satisfies the condition $\lim_{x\to+\infty}v(x)=0$. For each $f\in UC_b(\mathbb{R})$ define $G(0)f=f$ and
\begin{equation}\label{basicChfuncdef}
(G(t)f)(x)=f(x+t+tv(1/t))\textrm{ for all }x\in\mathbb{R}, t>0. 
\end{equation}
Then: 1. For all $t\geq 0$ we have $\|e^{tL}\|=\|G(t)\|=1$.

2. $G$ is a Chernoff function for $(e^{tL})_{t\geq 0}$, i.e. for all $T>0$ we have 
$$
\lim_{n\to\infty}\sup_{t\in[0,T]}\|G(t/n)^n f-e^{tL}f\|=0\textrm{ for each }f\in UC_b(\mathbb{R}).
$$

3. If, additionally, function $v$ is continuous and non-increasing everywhere on $(0,+\infty)$, then for all $f\in UC_b(\mathbb{R})$ and all $T>0$ we have
\begin{equation}
\sup_{t\in[0,T]}\|G(t/n)^n f-e^{tL}f\|=\omega_f(Tv(n/T))\textrm{ for each }n=1,2,3,\dots
\end{equation}
where $\omega_f$ is the modulus of continuity of the function $f$.
\end{theorem}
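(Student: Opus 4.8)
The plan is to reduce everything to an explicit closed form for the iterate $G(t/n)^n$; once that is in hand all three claims fall out almost immediately. \emph{Part 1.} For fixed $t>0$ the operator $G(t)$ is again a translation, $(G(t)f)(x)=f(x+\tau)$ with $\tau=t+tv(1/t)\in\mathbb{R}$, it sends $UC_b(\mathbb{R})$ into itself (a shift of a uniformly continuous bounded function is again such), and $G(0)=I$. Just as in item~7 of lemma~\ref{translemma}, translations are isometries of $(UC_b(\mathbb{R}),\|\cdot\|)$, so $\|G(t)\|=1=\|e^{tL}\|$ for all $t\ge0$.

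\emph{Key computation.} Starting from $(G(s)g)(x)=g(x+s+sv(1/s))$, a one-step induction on $n$ gives, for every $s>0$ and $n\ge1$,
\[
(G(s)^n f)(x)=f\bigl(x+ns+nsv(1/s)\bigr).
\]
Setting $s=t/n$ (so $1/s=n/t$) yields, for $t>0$, $(G(t/n)^n f)(x)=f\bigl(x+t+tv(n/t)\bigr)$, while $(e^{tL}f)(x)=f(x+t)$. Since $tv(n/t)\ge0$ and translations preserve the supremum, substituting $y=x+t$ produces the exact residual
\[
\|G(t/n)^n f-e^{tL}f\|=\omega_f\bigl(tv(n/t)\bigr)\qquad(t>0),
\]
with value $0$ at $t=0$; hence $\sup_{t\in[0,T]}\|G(t/n)^n f-e^{tL}f\|=\sup_{0<t\le T}\omega_f\bigl(tv(n/t)\bigr)$.

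\emph{Part 2.} Fix $\varepsilon>0$. Since $v(x)\to0$ as $x\to+\infty$, choose $M$ with $v(x)<\varepsilon$ for $x>M$. If $n>MT$ then for every $t\in(0,T]$ we have $n/t\ge n/T>M$, so $tv(n/t)<T\varepsilon$ and, $\omega_f$ being non-decreasing, $\omega_f(tv(n/t))\le\omega_f(T\varepsilon)$. Thus $\sup_{t\in[0,T]}\|G(t/n)^n f-e^{tL}f\|\le\omega_f(T\varepsilon)$ for all $n>MT$; because $\omega_f$ is continuous at $0$ with $\omega_f(0)=0$, letting $\varepsilon\to0$ forces the $\limsup$ over $n$ to be $0$, which is claim~2 for every $f\in UC_b(\mathbb{R})$.

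\emph{Part 3.} Now assume $v$ is continuous and non-increasing. I claim $t\mapsto tv(n/t)$ is non-decreasing on $(0,+\infty)$: with $s=n/t$ one has $tv(n/t)=\tfrac{n}{s}v(s)$, a product of two non-negative functions of $s$ — namely $s\mapsto n/s$ and $s\mapsto v(s)$ — each non-increasing on $(0,+\infty)$; a product of non-negative non-increasing functions is non-increasing, and since $t\mapsto n/t$ is decreasing, $t\mapsto tv(n/t)$ is non-decreasing. Hence $\sup_{0<t\le T}tv(n/t)=Tv(n/T)$, and since $\omega_f$ is non-decreasing, $\sup_{0<t\le T}\omega_f(tv(n/t))=\omega_f(Tv(n/T))$; together with the residual formula this is exactly claim~3. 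The argument is mechanical once one spots the collapse of $G(t/n)^n$ to a single translation by $t+tv(n/t)$; the only mildly delicate points are the quantifier order in Part~2 (the threshold $n>MT$ depends on $\varepsilon$) and the product-monotonicity remark in Part~3 — neither is a real obstacle.
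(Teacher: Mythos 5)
Your reduction of $G(t/n)^n$ to the single translation $f\mapsto f(\cdot+t+tv(n/t))$ is exactly the paper's starting point, and Parts 1 and 2 are fine. But Part 3 rests on a false identity: you claim that for each fixed $t$ the residual equals $\omega_f(tv(n/t))$. What the change of variable $y=x+t$ actually gives is
\[
\|G(t/n)^n f-e^{tL}f\|=\sup_{y\in\mathbb{R}}|f(y+\tau)-f(y)|,\qquad \tau=tv(n/t),
\]
which is the supremum over pairs at distance \emph{exactly} $\tau$, whereas $\omega_f(\tau)=\sup_{|x_1-x_2|\le\tau}|f(x_1)-f(x_2)|$ is the supremum over pairs at distance \emph{at most} $\tau$. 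Only the inequality $\le$ holds in general: for $f(x)=\sin x$ and $\tau=2\pi$ the left-hand side is $0$ while $\omega_f(2\pi)=2$. Part 2 survives because there you only use the upper bound, but in Part 3 the assertion is an equality, and your argument only delivers $\sup_{t\in[0,T]}\|G(t/n)^nf-e^{tL}f\|\le\omega_f(Tv(n/T))$; the matching lower bound is not established by observing that $\sup_{0<t\le T}tv(n/t)=Tv(n/T)$.

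The missing step — and the one the paper supplies — is that the map $t\mapsto tv(n/t)$ is not only non-decreasing (your product-of-monotone-functions argument for this is correct) but also \emph{continuous}, hence by the intermediate value theorem its image covers the whole interval $(0,Tv(n/T)]$. Then
\[
\sup_{0<t\le T}\sup_{y}|f(y+tv(n/t))-f(y)|=\sup_{0<\tau\le Tv(n/T)}\sup_{y}|f(y+\tau)-f(y)|=\omega_f(Tv(n/T)),
\]
because taking the supremum over \emph{all} $\tau$ in the interval recovers the modulus of continuity even though no single $\tau$ does. Note that your Part 3 never actually uses the continuity of $v$, which is a hypothesis of item 3 for precisely this reason; that is the tell-tale sign of the gap.
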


\begin{proof}Before checking the proof please see lemma~\ref{translemma} for the properties of the (semi)group of translations.

1. Item 7 of lemma \ref{translemma} states that $\|e^{tL}\|=1$. It is clear that $G(t)$ is a linear bounded operator for each $t\geq 0$. For fixed $t\geq 0$ we have $\|G(t)f\|=\sup_{x\in\mathbb{R}}|f(x+t+tv(1/t))|=\sup_{y\in\mathbb{R}}|f(y)|=\|f\|$ so $\|G(t)\|=1$ for all $t\geq 0$.

2. It follows from the definition (\ref{basicChfuncdef}) of function $G$ that  $(G(t/n)f)(x)=f(x+t/n+(t/n)v(n/t))$ and $(G(t/n)^nf)(x)=f(x+t+tv(n/t))$, so 
$$
\sup_{t\in[0,T]}\|e^{tL}f-G(t/n)^nf\|=
\textrm{ [due to $e^{tL}f-G(t/n)^nf=0$ as $t=0$] }=
$$
$$
\sup_{t\in(0,T]}\|e^{tL}f-G(t/n)^nf\|=
\sup_{t\in(0,T]}\sup_{x\in\mathbb{R}}|f(x+t)-f(x+t+tv(n/t))|=
$$
$$
=\textrm{ [change of variable $x+t=y$] }=
\sup_{t\in(0,T]}\sup_{y\in\mathbb{R}}|f(y)-f(y+tv(n/t)).
$$
Then by changing the order of supremums we obtain:
\begin{equation}\label{eqProof2}
\sup_{t\in[0,T]}\|e^{tL}f-G(t/n)^nf\| = \sup_{y\in\mathbb{R}}\sup_{t\in(0,T]}|f(y)-f(y+tv(n/t))|.
\end{equation}
Function $f\in UC_b(\mathbb{R})$ is uniformly continuous so
for each $\varepsilon>0$ there exists such $\delta>0$
that for each $y\in\mathbb{R}$ condition $tv(n/t)<\delta$ implies inequality $|f(y)-f(y+tv(n/t))|<\varepsilon$. We have $n/t \ge n/T$ for all $t\in(0,T]$, and $\lim_{z\to+\infty}v(z)=0$.
So if $z_0$ is large enough to guarantee that $Tv(z)<\delta$ for all $z>z_0$, then for all $n>Tz_0$ and all $t\in(0,T]$ we have $tv(n/t)\le Tv(n/t)<\delta$,
which implies $|f(y)-f(y+tv(n/t))|<\varepsilon$ for all $n>Tz_0$, all $t\in(0,T]$ and all $y\in\mathbb{R}$.
Hence we get
$$
\lim_{n\to\infty}\sup_{y\in\mathbb{R}}\sup_{t\in(0,T]}\big|f(y)-f(y+tv(n/t))\big| = 0.
$$
This proves item 2 thanks to  equality~\eqref{eqProof2}.

3. Let us use the equality~\eqref{eqProof2} once more. Thanks to conditions in the item 3 of the theorem function $(0,+\infty)\ni x\longmapsto v(x)$ is non-increasing and continuous. So the function $(0,T]\ni t\mapsto tv(n/t)$ is non-decreasing and  continuous hence it maps the interval $(0,T]$ onto the interval $(0,Tv(n/T)]$. Performing a change of variable $\tau=tv(n/t)$ in~\eqref{eqProof2} we get:
$$
\sup_{t\in[0,T]}\|e^{tL}f-G(t/n)^nf\| = \sup_{y\in\mathbb{R}}\sup_{0<\tau\le Tv(n/T)}|f(y)-f(y+\tau)|=
$$
$$
=[x=y+\tau,\tau=x-y]=\sup_{0<x-y\leq Tv(n/T)}|f(y)-f(x)|=\omega_f(Tv(n/T)),
$$
where $\omega_f$ is the modulus of continuity of function $f$. Recall that $\omega_f$ is well-defined because $f$ is uniformly continuous. Item 3 is proved.
\end{proof}

With the above theorem we can provide examples powerful enough to answer rather general questions. The following proposition gives an example of Chernoff approximations that converge on each vector but do not converge in operator norm.

\begin{proposition} There exists a
Banach space $\mathcal{F}$, $C_0$-semigroup $(e^{tL})_{t\geq 0}$ in $\mathcal{F}$ with generator $(L,D(L))$, and Chernoff function $G$ for operator $(L,D(L))$ such that: 

1. $\lim_{n\to\infty}\|G(t/n)^nf-e^{tL}f\|=0$ for all $f\in\mathcal{F}$,

2. $\|e^{tL}\|=\|G(t)\|=1$,

3. for each $t>0$ and each $n\in\mathbb{N}$ there exists $f_n\in\mathcal{F}$ such that $\|f_n\|=1$ and $\|G(t/n)^nf_n-e^{tL}f_n\|\geq \|f_n\|$ so $\|G(t/n)^n-e^{tL}\|\geq 1 \not\to 0$ as $n\to\infty$.
\end{proposition}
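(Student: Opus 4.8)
The strategy is to instantiate Theorem~\ref{thlow1} with a carefully chosen parameter function $v$ that decays to $0$ slowly enough that the quantity $\omega_f(Tv(n/T))$ controlling the worst-case error never becomes small, yet such that $G$ still satisfies all hypotheses of that theorem (so that items 1 and 2 follow automatically). Concretely, I would take $\mathcal{F}=UC_b(\mathbb{R})$ with the group of translations $(e^{tL}f)(x)=f(x+t)$ as in Lemma~\ref{translemma}, and pick, say, $v(x)=1/\ln(x+e)$ for $x>0$, which is continuous, non-increasing, and tends to $0$ as $x\to+\infty$. Then $G$ defined by \eqref{basicChfuncdef} is a Chernoff function for $(e^{tL})_{t\ge0}$ with $\|e^{tL}\|=\|G(t)\|=1$ by parts 1 and 2 of Theorem~\ref{thlow1}, and $\lim_{n\to\infty}\|G(t/n)^nf-e^{tL}f\|=0$ for every $f$, so items 1 and 2 of the proposition hold.

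For item 3, fix $t>0$ and $n\in\mathbb{N}$ and set $\tau_n:=tv(n/t)>0$; note $\tau_n\to0$ but $\tau_n>0$ for every $n$. From the computation in the proof of Theorem~\ref{thlow1} (or directly from \eqref{basicChfuncdef}) we have $(G(t/n)^nf)(x)=f(x+t+\tau_n)$, hence $\|G(t/n)^nf-e^{tL}f\|=\sup_{x}|f(x+t+\tau_n)-f(x+t)|=\sup_y|f(y+\tau_n)-f(y)|$. It therefore suffices, for the given shift $\tau_n>0$, to exhibit $f_n\in UC_b(\mathbb{R})$ with $\|f_n\|=1$ and $\sup_y|f_n(y+\tau_n)-f_n(y)|\ge1$. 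I would take $f_n$ to be a $2\tau_n$-periodic "triangular wave" (or a smoothed version, or simply $f_n(y)=\sin(\pi y/\tau_n)$) oscillating between $+1$ and $-1$, so that $f_n(y+\tau_n)=-f_n(y)$ and the supremum of $|f_n(y+\tau_n)-f_n(y)|$ equals $2\ge1=\|f_n\|$; the function is uniformly continuous and bounded, so lies in $\mathcal{F}$. This gives $\|G(t/n)^n-e^{tL}\|\ge1$ for every $n$, so the operator-norm convergence fails, completing item 3.

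The only mild subtlety — not really an obstacle — is bookkeeping: one must make sure the chosen $v$ genuinely satisfies all the hypotheses needed for parts 2 and 3 of Theorem~\ref{thlow1} (continuity and monotonicity on all of $(0,+\infty)$, limit $0$ at infinity), and one must note that the "bad" vector $f_n$ is allowed to depend on $n$ (and on $t$), which is exactly what the statement permits. Indeed, no single $f$ works for all $n$, precisely because $G$ \emph{is} a Chernoff function; the point is that the direction realizing the worst case drifts with $n$ (the relevant oscillation frequency $\sim1/\tau_n$ blows up), and this drift is what separates strong convergence from norm convergence here. One could alternatively phrase the whole argument abstractly — for any shift $h>0$ the translation operators satisfy $\|Q(h)-I\|\ge2$ on $UC_b(\mathbb{R})$ — and then apply it with $h=\tau_n$, but the explicit $f_n$ above is shorter and self-contained.
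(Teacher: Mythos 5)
Your proof is correct and follows essentially the same route as the paper: both rely on Theorem~\ref{thlow1} for items 1--2 and then exhibit $n$-dependent unit vectors oscillating at the scale of the residual shift $\tau_n=tv(n/t)>0$ to defeat norm convergence. The only differences are cosmetic --- the paper takes $v(t)=1/t$ and a piecewise-linear ramp of width $t^2/n$ evaluated at $x=-t$, while you take $v(x)=1/\ln(x+e)$ and the antiperiodic function $\sin(\pi y/\tau_n)$ --- and both choices work.
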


\begin{proof}
Indeed, consider $\mathcal{F}=UC_b(\mathbb{R})$, $(e^{tL}f)(x)=f(x+t)$, and set $v(t)=1/t$ in theorem \ref{thlow1}, then $(G(t)f)(x)=f(x+t+tv(1/t))$ becomes $(G(t)f)(x)=f(x+t+t^2)$ and item 2 of theorem \ref{thlow1} says that $\lim_{n\to\infty}\|G(t/n)^nf-e^{tL}f\|=0$ for all $f\in\mathcal{F}$. Item 1 is proved. Item 2 holds due to item 1 of theorem \ref{thlow1}. 

Let us prove item 3. Suppose that $t>0$ is fixed and define
$$
f_n(x)=\left\{ \begin{array}{ll}
	0  & \textrm{ for }\ x\leq 0,\\
	\frac{n}{t^2}x &  \textrm{ for }\  0<x<t^2/n,\\
	1\ & \textrm{ for }\   x\geq t^2/n.
	\end{array}\right.
$$
Then
$$
(e^{tL}f_n)(x)=f_n(x+t)=\left\{ \begin{array}{ll}
	0  & \textrm{ for }\ x+t\leq 0,\\
	\frac{n}{t^2}(x+t) &  \textrm{ for }\  0<x+t<t^2/n,\\
	1\ & \textrm{ for }\   x+t\geq t^2/n.
	\end{array}\right.
$$
$$
(e^{tL}f_n)(x)=f_n(x+t)=\left\{ \begin{array}{ll}
	0  & \textrm{ for }\ x\leq -t,\\
	\frac{n}{t^2}(x+t) &  \textrm{ for }\  -t<x<t^2/n-t,\\
	1\ & \textrm{ for }\   x\geq t^2/n-t.
	\end{array}\right.
$$
It directly follows from $(G(t)f)(x)=f(x+t+t^2)$ that $(G(t/n)f)(x)=f(x+t/n+(t/n)^2)$ and $(G(t/n)^nf)(x)=f(x+t+t^2/n)$ for all $f\in\mathcal{F}$. So
$$
(G(t/n)^nf_n)(x)=\left\{ \begin{array}{ll}
	0  & \textrm{ for }\ x\leq -t-t^2/n,\\
	\frac{n}{t^2}(x+t+t^2/n) &  \textrm{ for }\  -t-t^2/n<x<t^2/n-t-t^2/n,\\
	1\ & \textrm{ for }\   x\geq t^2/n-t-t^2/n.
	\end{array}\right.
$$
$$
(G(t/n)^nf_n)(x)=\left\{ \begin{array}{ll}
	0  & \textrm{ for }\ x\leq -t-t^2/n,\\
	\frac{n}{t^2}x+n/t+1 &  \textrm{ for }\  -t-t^2/n<x<-t,\\
	1\ & \textrm{ for }\   x\geq -t.
	\end{array}\right.
$$
Then for $x_t=-t$ we have $(e^{tL}f_n)(x_t)=0$ and $(G(t/n)^nf_n)(x_t)=1$, so $\|e^{tL}f_n-G(t/n)^nf_n\|=\sup_{x\in\mathbb{R}}|(e^{tL}f_n)(x)-(G(t/n)^nf_n)(x)|\geq |0-1|=1$. Item 3 is proved.
\end{proof}

\begin{proposition}\label{prop1}
For an arbitrary non-increasing continuous function $v\colon (0,+\infty)\to [0,+\infty)$ vanishing at infinity at arbitrary high rate (e.g. $v(x)=(1+x)^{-k}$, $v(x)=e^{-x}$, $v(x)=e^{-e^x}$) there exist $C_0$-semigroup $(e^{tL})_{t\geq 0}$ with generator $(L,D(L))$ in Banach space $\mathcal{F}$, Chernoff function $G$ and vector $f\in\mathcal{F}$ such that $f\notin D(L)$ but the speed of convergence of Chernoff approximations $G(t/n)^n f$ is arbitrary high, i.e. for all $T>0$ we have $\sup_{t\in[0,T]}\|G(t/n)^n f-e^{tL}f\|=Tv(n/T)$ for all $n=1,2,3,\dots$ such that $Tv(n/T)\leq 1$. Moreover, we have $\|e^{tL}\|=\|G(t)\|=\|f\|=1$ for all $t\ge0$.
\end{proposition}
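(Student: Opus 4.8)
The plan is to apply Theorem~\ref{thlow1} with $\mathcal{F}=UC_b(\mathbb{R})$, the group of translations $(e^{tL}f)(x)=f(x+t)$, the function $v$ given in the statement, and a carefully chosen vector $f$. Note that by hypothesis $v$ is continuous, non-increasing on $(0,+\infty)$ and tends to $0$ at infinity, so all assumptions of Theorem~\ref{thlow1}, including the extra ones needed for its item~3, hold. As Chernoff function I take $G$ with $(G(t)f)(x)=f(x+t+tv(1/t))$ from~\eqref{basicChfuncdef}. For $f$ I would use the $2$-periodic ``triangle wave'' determined by $f(x)=|x|$ for $x\in[-1,1]$, equivalently $f(x)=\tfrac1\pi\arccos(\cos\pi x)$: it is Lipschitz with constant $1$, takes values in $[0,1]$, has $f'(x)=\pm1$ at every non-integer point, and fails to be differentiable exactly at the integers.

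First I would record the elementary properties of this $f$. Being bounded and Lipschitz, $f\in UC_b(\mathbb{R})$ with $\|f\|=\sup_x f(x)=1$. Because $f$ has corners at the integers it is not everywhere differentiable, so by item~2 of Lemma~\ref{translemma} we have $f\notin D(L)=UC_b^1(\mathbb{R})$. Next I would compute the modulus of continuity: since $f$ is $1$-Lipschitz and $0\le f\le1$, for every $s\ge0$ we have $\omega_f(s)\le\min(s,1)$; conversely, on the segment $[0,1]$ the function increases with slope $1$, so taking $x_1=0$ and $x_2=\min(s,1)$ gives $|f(x_1)-f(x_2)|=\min(s,1)$. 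Hence $\omega_f(s)=s$ for $0\le s\le1$ and $\omega_f(s)=1$ for $s\ge1$.

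Then I would invoke Theorem~\ref{thlow1}: item~1 gives $\|e^{tL}\|=\|G(t)\|=1$ for all $t\ge0$, item~2 gives $\lim_{n\to\infty}\sup_{t\in[0,T]}\|G(t/n)^nf-e^{tL}f\|=0$ (so $G$ is indeed a Chernoff function), and item~3 gives the exact equality $\sup_{t\in[0,T]}\|G(t/n)^nf-e^{tL}f\|=\omega_f(Tv(n/T))$ for every $T>0$ and every $n$. Combining this with the formula for $\omega_f$, whenever $Tv(n/T)\le1$ we obtain $\sup_{t\in[0,T]}\|G(t/n)^nf-e^{tL}f\|=Tv(n/T)$, which together with $\|f\|=1$ is precisely the claim; since $v$ vanishes at infinity, $Tv(n/T)\le1$ for all large $n$, so the assertion is non-vacuous. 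I do not expect a genuine obstacle here; the only point that needs care is choosing an $f$ whose modulus of continuity is \emph{exactly} linear near $0$ while $f$ itself lies outside $D(L)$, and noticing that $\omega_f$ saturates at the value $1$ — which is exactly why the conclusion is stated only for those $n$ with $Tv(n/T)\le1$.
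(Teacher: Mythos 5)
Your proposal is correct and follows essentially the same route as the paper: both apply item~3 of Theorem~\ref{thlow1} to the translation group on $UC_b(\mathbb{R})$ with the given $v$, choosing a $1$-Lipschitz unit-norm $f$ that is not everywhere differentiable (so $f\notin D(L)$) and whose modulus of continuity satisfies $\omega_f(s)=s$ for $s\in[0,1]$. The only difference is cosmetic — the paper takes the ramp $f(x)=\max(0,\min(x,1))$ where you take a periodic triangle wave — and your verification of $\omega_f$ and of $f\notin D(L)$ is sound.
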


\begin{proof}
Indeed, set $\mathcal{F}$, $(e^{tL})_{t\geq 0}$, $G$ as in theorem \ref{thlow1} and define $f(x)=\max(0,\min(x,1))$ for all $x\in\mathbb{R}$. Then $f$ is not differentiable at $0$ 
so $f\notin D(L)$, and $\omega_f(x)=x$ for $x\in[0,1]$, hence proposition is correct thanks to item 3 of theorem~\ref{thlow1}.
\end{proof}

\begin{remark}
It is possible to show that after a slight modification of proposition~\ref{prop1} there exists not only $f\notin D(L)$ on which the speed of convergence is arbitrary high, but also vector $g\in\mathcal{F}$ on which the convergence is arbitrary slow.
\end{remark}

\begin{proposition}
There exist $C_0$-semigroup $(e^{tL})_{t\geq 0}$ in Banach space $\mathcal{F}$, Chernoff function $G$ and vector $f\in\mathcal{F}$ such that $f\in \cap_{j=1}^\infty D(L^j)$ but the speed of convergence is arbitrary low, i.e. for arbitrary chosen non-increasing continuous function $u\colon (0,+\infty)\to [0,+\infty)$ vanishing at infinity at arbitrary low rate (e.g. $u(x)=(1+x)^{-1/k}$, $u(x)=1/\ln(x+e)$, $u(x)=1/\ln(\ln(x+e^e))$) and all $T>0$ we have $\sup_{t\in[0,T]}\|G(t/n)^n f-e^{tL}f\|=Tu(n/T)$ for all $n=1,2,3,\dots$ such that $Tu(n/T)\leq 1$. Moreover, we have $\|e^{tL}\|=\|G(t)\|=\|f\|=1$ for all $t\ge0$.
\end{proposition}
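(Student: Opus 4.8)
The plan is to apply Theorem~\ref{thlow1} once more, now taking the parameter function $v$ in~\eqref{basicChfuncdef} to be exactly the given slowly vanishing function $u$. Put $\mathcal{F}=UC_b(\mathbb{R})$, let $(e^{tL})_{t\ge0}$ be the translation (semi)group of Lemma~\ref{translemma}, and let $G$ be the associated Chernoff function with $v=u$. Since $u\colon(0,+\infty)\to[0,+\infty)$ is non-increasing, continuous, and tends to $0$ at infinity, it satisfies all hypotheses of Theorem~\ref{thlow1}; hence item~3 of that theorem yields $\sup_{t\in[0,T]}\|G(t/n)^nf-e^{tL}f\|=\omega_f\bigl(Tu(n/T)\bigr)$ for every $f\in UC_b(\mathbb{R})$, every $T>0$, and every $n$, while item~1 gives $\|e^{tL}\|=\|G(t)\|=1$. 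Thus everything reduces to exhibiting one vector $f$ with $\|f\|=1$, with $f\in\bigcap_{j=1}^\infty D(L^j)$, and with $\omega_f(s)=s$ for all $s\in[0,1]$; for such an $f$ and any $n$ with $Tu(n/T)\le1$ the supremum above equals $Tu(n/T)$, which is precisely the assertion.

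I would build $f$ as a smooth bounded ``ramp''. By item~4 of Lemma~\ref{translemma}, $\bigcap_{j\ge1}D(L^j)=\bigcap_{j\ge1}UC_b^j(\mathbb{R})$, and $C_b^\infty(\mathbb{R})$ is contained in this intersection, since a bounded derivative makes every $f^{(j)}$ Lipschitz and hence uniformly continuous. So it suffices to take $f\in C_b^\infty(\mathbb{R})$ with $|f'(x)|\le1$ for all $x$, with $f'\equiv1$ on some closed interval of length at least $1$, and with $f$ taking values in $[-1,1]$ and attaining the values $\pm1$ (so that $\|f\|=1$). Such an $f$ is obtained by mollifying a piecewise-linear clamp-type profile $f_0$ whose central segment has slope $1$ and length slightly larger than $1$, the side segments monotonically leveling $f_0$ off to the constants $-1$ and $+1$ while keeping $|f_0'|\le1$ and $-1\le f_0\le1$; convolving with a narrow symmetric mollifier preserves the bound $|f'|\le1$, leaves a sub-segment of slope exactly $1$ of length $\ge1$ in the middle, and does not enlarge the range.

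It remains to verify the two properties of $f$. Because $|f'|\le1$ everywhere, $|f(x_1)-f(x_2)|=\bigl|\int_{x_2}^{x_1}f'\bigr|\le|x_1-x_2|$ for all $x_1,x_2$, so $\omega_f(s)\le s$ for every $s\ge0$; conversely, for $s\in[0,1]$ we may pick $x_1,x_2$ inside the length-$\ge1$ segment on which $f'\equiv1$ with $x_1-x_2=s$, giving $f(x_1)-f(x_2)=s$, hence $\omega_f(s)=s$ on $[0,1]$. Also $\|f\|=1$ and $f\in C_b^\infty(\mathbb{R})\subset\bigcap_{j\ge1}D(L^j)$ by construction. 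Substituting this $f$ into the identity of item~3 of Theorem~\ref{thlow1} completes the argument. The only genuine work is the mollification construction of the ramp, which is routine; there is no real obstacle here — the whole point is the elementary observation that a function which is bounded by $1$ yet has $|f'|\le1$ with equality on an entire unit-length interval has modulus of continuity equal to the identity throughout $[0,1]$.
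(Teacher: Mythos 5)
Your proposal is correct and follows essentially the same route as the paper: take $v=u$ in Theorem~\ref{thlow1} and exhibit a smooth bounded ramp $f\in C_b^\infty(\mathbb{R})\subset\bigcap_{j\ge1}D(L^j)$ with $|f'|\le1$ everywhere and $f'\equiv1$ on a unit-length interval, so that $\omega_f(s)=s$ on $[0,1]$ and item~3 of that theorem gives the claimed identity. In fact your normalization is slightly more careful than the paper's, whose explicit ramp attains the values $\pm2$ and so has $\|f\|=2$ rather than the stated $\|f\|=1$.
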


\begin{proof}
Indeed, set $\mathcal{F}$, $(e^{tL})_{t\geq 0}$, $G$ as in theorem \ref{thlow1}, $v=u$ and define
$$
f(x)=\left\{ \begin{array}{ll}
	2&  \textrm{ for }\  x\geq3,\\
	x  & \textrm{ for }\ x\in[0,1],\\
	C^\infty\textrm{-continued with }0\leq f'(x)\leq 1\ & \textrm{ for }\   x\in [1,3],
	\\
	-f(-x)&  \textrm{ for }\  x<0.
\end{array}\right.
$$
Each derivative of function $f$ is continuous on $\mathbb{R}$ and vanishes outside $[-3,3]$, and so it is bounded; hence $f\in \cap_{j=1}^\infty D(L^j)$. Also $\omega_f(x)=x$ for $x\in[0,1]$; hence the proposition is correct thanks to item 3 of the theorem \ref{thlow1}.
\end{proof}

\begin{remark}
The examples above given show that even in a very natural and simple setting we should not expect the convergence in the operator norm $\|S(t/n)^n-e^{tL}\|\to 0$ as $n\to\infty$. Instead, in general setting (i.e. under conditions of the Chernoff theorem~\ref{ChernoffTheor}) we have only convergence $\|S(t/n)^nf-e^{tL}f\|\to 0$ as $n\to\infty$ on every vector $f$, and this convergence may be arbitrary slow, so we need to choose the vector $f$ and the Chernoff function $S$ wisely if we want to have fast convergence. 

In the next section we provide conditions that guarantee high speed of convergence of Chernoff approximations. Under some conditions we show that if $S(0)=I$, $S'(0)=L$, $S''(0)=L^2$,\ldots,$S^{(m)}(0)=L^m$ and the difference $S(t)f-\sum_{k=0}^mt^kL^k/k!f$ is estimated properly on a suitable set of vectors $f$, then 
$\|S(t/n)^nf-e^{tL}f\|$ behave as $1/n^m$ or close to it, depending on how we estimate the difference $S(t)f-\sum_{k=0}^mt^kL^k/k!f$. 
\end{remark}

%--------------------------- ----------------------------------
\section{Estimates for fast convergence (main result)}

We start from the simple, purely algebraic lemma that establishes the decomposition that is basic for our approach.

\begin{lemma}
Let $Z$ and $Y$ be elements of a ring with associative (but maybe
non-commutative) multiplication with unity (e.g. $Z$ and $Y$ may be linear, everywhere defined operators mapping some linear space into itself). Then the following equality holds:
\begin{equation}\label{algebr}
    Z^n - Y^n =\sum_{k=0}^{n-1}Z^{n-k-1}(Z-Y)Y^k.
\end{equation}
Proof.
\end{lemma}
%\begin{proof}
$$R.h.s.=\sum_{k=0}^{n-1}Z^{n-k-1}(Z-Y)Y^k = \sum_{k=0}^{n-1}Z^{n-k}Y^k - \sum_{k=0}^{n-1}Z^{n-k-1}Y^{k+1}=
$$
$$
=\left(Z^nY^0+\sum_{k=1}^{n-1}Z^{n-k}Y^k\right)-\left(\sum_{j=0}^{n-2}Z^{n-j-1}Y^{j+1}+Z^{n-(n-1)-1}Y^{n-1+1}\right)\stackrel{j=k-1}{=}
$$
%$$
%=\left(Z^n+\sum_{k=1}^{n-1}Z^{n-k}Y^k\right)-\left( \sum_{k=0}^{n-2}Z^{n-k-1}Y^{k+1}+Y^{n}\right)\stackrel{k=j+1}{=}
%$$
$$
=Z^n + \sum_{k=1}^{n-1}Z^{n-k}Y^k-\sum_{k=1}^{n-1}Z^{n-k}Y^k-Y^{n}=Z^n-Y^n=L.h.s. \eqno\square
$$
%\end{proof}

\medskip
This lemma has the following corollary regarding the high speed of convergence of Chernoff approximations.

\begin{lemma}\label{mainlemmaoc}
Suppose that the following three conditions are met:
\begin{enumerate}
\item %1. 
$C_0$-semigroup $(e^{tL})_{t\ge 0}$ with generator $(L,D(L))$ in Banach space $\mathcal{F}$ is given, 
such that for some $M_1\geq 1$, $w\geq 0$ and $T>0$ the inequality $\|e^{tL}\| \le M_1e^{wt}$ holds 
for all $t\in [0,T]$. 

\item %2. 
There exists a mapping $S\colon (0,T]\to\mathscr{L}(\mathcal{F})$ such that for some constant $M_2\geq 1$ the inequality $\|S(t)^k\| \le M_2e^{kwt}$ holds for all $t\in (0,T]$ and all $k=1,2,3,\dots$.

\item %3
Numbers  $m\in\{0,1,2,\dots\}$ and $p\in\{1,2,3,\dots\}$ are fixed. There exists a $(e^{tL})_{t\ge 0}$-invariant subspace $\mathcal{D}\subset D(L^{m+p})\subset \mathcal{F}$ (i.e. $(e^{tL})(\mathcal{D})\subset \mathcal{D}$ for any $t\geq0$) and functions $C_j\colon(0,T]\to [0,+\infty)$, $j=0,1,\dots,m+p$ such that for all $t\in(0,T]$ and all $f\in\mathcal{D}$ we have 
\begin{equation} \label{ocdiff}
\left\|S(t) f - e^{tL}f\right\| \le t^{m+1}\sum_{j=0}^{m+p}C_j(t)\|L^{j}f\|.
\end{equation}
\end{enumerate}
Then for all $t>0$, all integer $n\geq t/T$ and all $f\in \mathcal{D}$ the following estimate is true:
\begin{equation} \label{ocresdiff1}
\|S(t/n)^n f - e^{tL}f\|\leq \frac{M_1M_2t^{m+1}e^{wt}}{n^{m}}\sum_{j=0}^{m+p}e^{-wt/n}C_j(t/n)\|L^{j}f\|.
\end{equation}
\end{lemma}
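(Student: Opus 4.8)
The plan is to apply the algebraic identity \eqref{algebr} with $Z=S(t/n)$ and $Y=e^{(t/n)L}$, and then estimate the $n$ resulting summands one at a time, drawing on hypothesis~2 for the powers of $S$, on hypothesis~3 (inequality~\eqref{ocdiff}) for the one-step discrepancy $S(t/n)-e^{(t/n)L}$, and on hypothesis~1 (together with the $(e^{tL})_{t\ge0}$-invariance of $\mathcal{D}$ and the commutation of $L^{j}$ with the semigroup) for the leftover semigroup factor. Fix $t>0$, an integer $n\ge t/T$, and $f\in\mathcal{D}$; then $t/n\in(0,T]$, so $S(t/n)$ and $e^{(t/n)L}$ are everywhere-defined bounded operators on $\mathcal{F}$, and the semigroup law gives $\bigl(e^{(t/n)L}\bigr)^{n}=e^{tL}$. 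Since $\mathscr{L}(\mathcal{F})$ is an associative ring with unity $I$, identity \eqref{algebr} applies and, evaluated at $f$, yields
\begin{equation*}
S(t/n)^{n}f-e^{tL}f=\sum_{k=0}^{n-1}S(t/n)^{\,n-k-1}\bigl(S(t/n)-e^{(t/n)L}\bigr)e^{(kt/n)L}f .
\end{equation*}

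The next step is to bound the $k$-th summand for a fixed $k\in\{0,\dots,n-1\}$. Put $g_k:=e^{(kt/n)L}f$; since $\mathcal{D}$ is $(e^{tL})_{t\ge0}$-invariant, $g_k\in\mathcal{D}\subset D(L^{m+p})$, so every hypothesis may be invoked at $g_k$. By submultiplicativity the summand is at most $\|S(t/n)^{\,n-k-1}\|\cdot\|\bigl(S(t/n)-e^{(t/n)L}\bigr)g_k\|$. Hypothesis~2 gives $\|S(t/n)^{\,n-k-1}\|\le M_2e^{(n-k-1)wt/n}$, valid for every $k\in\{0,\dots,n-1\}$ including the boundary value $k=n-1$, where it reduces to $\|I\|=1\le M_2$. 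Inequality~\eqref{ocdiff} with parameter $t/n$ gives $\|\bigl(S(t/n)-e^{(t/n)L}\bigr)g_k\|\le(t/n)^{m+1}\sum_{j=0}^{m+p}C_j(t/n)\|L^{j}g_k\|$; and since $f\in D(L^{m+p})\subset D(L^{j})$ and $L^{j}$ commutes with the semigroup on $D(L^{j})$ for $j\le m+p$, one has $L^{j}g_k=e^{(kt/n)L}L^{j}f$, hence $\|L^{j}g_k\|\le\|e^{(kt/n)L}\|\,\|L^{j}f\|\le M_1e^{wkt/n}\|L^{j}f\|$ by hypothesis~1. Multiplying the three bounds, the exponents combine as $(n-k-1)wt/n+wkt/n=(n-1)wt/n$, which is independent of $k$, so each summand is at most $M_1M_2\,e^{(n-1)wt/n}\,(t^{m+1}/n^{m+1})\sum_{j=0}^{m+p}C_j(t/n)\|L^{j}f\|$.

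Finally I would sum the $n$ identical bounds over $k=0,\dots,n-1$: the factor $n$ turns $n^{-(m+1)}$ into $n^{-m}$, and $e^{(n-1)wt/n}=e^{wt}e^{-wt/n}$, giving
\begin{equation*}
\|S(t/n)^{n}f-e^{tL}f\|\le\frac{M_1M_2\,t^{m+1}e^{wt}e^{-wt/n}}{n^{m}}\sum_{j=0}^{m+p}C_j(t/n)\|L^{j}f\| ,
\end{equation*}
and moving the scalar $e^{-wt/n}$ under the summation sign yields exactly \eqref{ocresdiff1}.

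The structural argument is short; the work is in the bookkeeping. The points I expect to require the most care are: checking that \eqref{ocdiff} may legitimately be applied at the shifted vectors $e^{(kt/n)L}f$ (this is precisely what the $(e^{tL})_{t\ge0}$-invariance of $\mathcal{D}$ is for); invoking the commutation $L^{j}e^{sL}=e^{sL}L^{j}$ on $D(L^{j})$ for all $j\le m+p$ (standard for $C_0$-semigroups, but worth stating explicitly since it converts $\|L^{j}e^{(kt/n)L}f\|$ into $\|L^{j}f\|$); confirming that hypothesis~1 is available at every intermediate time $s=kt/n$ that occurs in the sum; and treating the boundary index $k=n-1$, where the leftmost factor degenerates to the identity.
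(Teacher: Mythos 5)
Your proposal is correct and follows essentially the same route as the paper's own proof: the telescoping identity \eqref{algebr} with $Z=S(t/n)$, $Y=e^{(t/n)L}$, followed by the bound $\|S(t/n)^{n-k-1}\|\le M_2e^{(n-k-1)wt/n}$, the application of \eqref{ocdiff} at the shifted vectors $e^{(kt/n)L}f$ (justified by the invariance of $\mathcal{D}$ and the commutation of $L^j$ with the semigroup), and the observation that the exponents combine to the $k$-independent quantity $e^{(n-1)wt/n}=e^{wt}e^{-wt/n}$. The boundary case $k=n-1$ and the bookkeeping points you flag are handled identically (or left implicit) in the paper, so there is nothing to add.
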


\begin{proof}
Setting $Z=S(t/n), Y=e^{(t/n)L}$ in formula (\ref{algebr}) we obtain
$$\|S(t/n)^n f - e^{tL}f\| \stackrel{by\;(\ref{algebr})}{=} 
\bigg\|\sum_{k=0}^{n-1} S(t/n)^{n-k-1} \left(S(t/n)-e^{(t/n) L}\right) \left(e^{(t/n) L}\right)^kf\bigg\|\leq
$$
$$
\leq \sum_{k=0}^{n-1} \left\|S(t/n)^{n-k-1}\right\| \cdot\left\|\left(S(t/n)-e^{(t/n) L}\right) \left(e^{(t/n) L}\right)^kf\right\|\stackrel{by\;(\ref{ocdiff})}{\leq}
$$
[here we put $\left(e^{(t/n) L}\right)^kf$ in the place of $f$ in (\ref{ocdiff})]
$$
\leq \sum_{k=0}^{n-1} \left\|S(t/n)^{n-k-1}\right\|\cdot\frac{t^{m+1}}{n^{m+1}}\sum_{j=0}^{m+p}C_j(t/n)\left\|L^{j}\left(e^{(t/n) L}\right)^kf\right\|=
$$
[here we use the fact that $C_0$-semigroup $(e^{tL})_{t\geq 0}$ maps $\mathcal{D}$ into $\mathcal{D}$ and commutes with $L^{j}$]
$$
=\sum_{k=0}^{n-1} \left\|S(t/n)^{n-k-1}\right\|\cdot\frac{t^{m+1}}{n^{m+1}}\sum_{j=0}^{m+p}C_j(t/n)\left\|\left(e^{(t/n) L}\right)^kL^{j}f\right\|\leq
$$
$$
\leq \sum_{k=0}^{n-1} \left\|S(t/n)^{n-k-1}\right\|\cdot\frac{t^{m+1}}{n^{m+1}}\|e^{(kt/n) L}\|\sum_{j=0}^{m+p}C_j(t/n)\|L^{j}f\|\leq
$$
$$
\leq \sum_{k=0}^{n-1} M_2e^{(n-k-1)wt/n}\cdot\frac{t^{m+1}}{n^{m+1}}M_1e^{w(kt/n)}\sum_{j=0}^{m+p}C_j(t/n)\|L^{j}f\|=
$$
$$
= \sum_{k=0}^{n-1}M_1M_2\frac{t^{m+1}}{n^{m+1}} e^{wt(n-1)/n}\sum_{j=0}^{m+p}C_j(t/n)\|L^{j}f\| =
$$
$$
=M_1M_2\frac{t^{m+1}}{n^{m}}e^{wt}\sum_{j=0}^{m+p}e^{-wt/n}C_j(t/n)\|L^{j}f\|.
$$
\end{proof}

Usually an a priori estimate in the form (\ref{ocdiff}) is not known. To overcome this problem we recall in the following lemma~\ref{lemmaTeylocexp} one fact which is most likely known but a short proof is better than reference. Using this fact, one can obtain (\ref{ocdiff}) studying only the norm of difference between $S(t)$ and its Taylor's polynomial because, as we will see now, $e^{tL}$ can also be approximated by (the same!) Taylor's polynomial.

\begin{lemma}\label{lemmaTeylocexp}
Let $\mathcal{F}$ be a Banach space, let $(e^{tL})_{t\ge 0}$ be a $C_0$-semigroup in $\mathcal{F}$ with generator $(L,D(L))$. 
Then for all $t\ge0$, all $m=0,1,2,\dots$ and all $f\in D(L^{m+1})$ we have the following formulas, where the integral is understood in Bochner's sense:
\begin{equation}\label{teylexp-f}
e^{tL}f = \sum_{k=0}^{m} \frac{t^kL^k f}{k!} + \int_0^t \frac{(t-s)^m}{m!}e^{sL}L^{m+1}f\, ds,
\end{equation}
\begin{equation}\label{teylexp-oc}
\left\|e^{tL}f - \sum_{k=0}^{m} \frac{t^kL^k f}{k!}\right\| \le \frac{t^{m+1}}{(m+1)!}\|L^{m+1}f\|\cdot\!\sup_{s\in[0,t]}\left\|e^{sL}\right\|.
\end{equation}
\end{lemma}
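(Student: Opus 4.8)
The plan is to regard $\varphi(s) = e^{sL}f$ as a smooth $\mathcal{F}$-valued curve and apply Taylor's theorem with integral remainder to it. First I would record the differentiability facts: for $g\in D(L)$ one has the standard semigroup identity $\frac{d}{ds}e^{sL}g = e^{sL}Lg = Le^{sL}g$, and $e^{sL}$ maps $D(L^k)$ into itself commuting with $L^k$ there. Iterating this, for $f\in D(L^{m+1})$ the curve $\varphi$ is $(m+1)$ times continuously differentiable on $[0,+\infty)$ with $\varphi^{(k)}(s)=e^{sL}L^kf$ for $k=0,1,\dots,m+1$; in particular each $\varphi^{(k)}$ is continuous, hence Bochner integrable on $[0,t]$.

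Next I would establish formula~\eqref{teylexp-f} by induction on $m$. The base case $m=0$ is the fundamental theorem of calculus for semigroups: for $f\in D(L)$, $e^{tL}f-f=\int_0^t e^{sL}Lf\,ds$, which is a classical property of $C_0$-semigroups (and follows from the previous paragraph together with the vector-valued FTC). For the inductive step, assume~\eqref{teylexp-f} holds with $m-1$ in place of $m$ and apply it to $f\in D(L^{m+1})\subset D(L^m)$; its remainder is $\int_0^t \frac{(t-s)^{m-1}}{(m-1)!}e^{sL}L^m f\,ds$. Integrating by parts with $u(s)=e^{sL}L^m f$ (so $u'(s)=e^{sL}L^{m+1}f$, using $L^m f\in D(L)$) and $dv=\frac{(t-s)^{m-1}}{(m-1)!}\,ds$, $v=-\frac{(t-s)^m}{m!}$, the boundary term produces $\frac{t^m}{m!}L^m f$ and the new integral is exactly $\int_0^t \frac{(t-s)^m}{m!}e^{sL}L^{m+1}f\,ds$; combining with the inductive hypothesis gives~\eqref{teylexp-f}. (Alternatively, one may reduce to the scalar Taylor theorem by pairing both sides with an arbitrary $\psi\in\mathcal{F}^*$, using that Bochner integration commutes with bounded functionals, and then invoke Hahn--Banach.)

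Finally, for the estimate~\eqref{teylexp-oc} I would simply bound the Bochner integral in~\eqref{teylexp-f}: pulling the norm inside,
$$
\left\|\int_0^t \frac{(t-s)^m}{m!}e^{sL}L^{m+1}f\,ds\right\|
\le \int_0^t \frac{(t-s)^m}{m!}\,\|e^{sL}\|\cdot\|L^{m+1}f\|\,ds
\le \|L^{m+1}f\|\,\Big(\sup_{s\in[0,t]}\|e^{sL}\|\Big)\int_0^t \frac{(t-s)^m}{m!}\,ds,
$$
and the last integral equals $t^{m+1}/(m+1)!$, which yields~\eqref{teylexp-oc}.

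The only real obstacle is the rigorous justification of the vector-valued calculus — differentiability of $s\mapsto e^{sL}L^kf$ with the claimed derivatives, the fundamental theorem of calculus, and integration by parts for Bochner integrals — but these are routine once the basic semigroup differentiation property $\frac{d}{ds}e^{sL}g=e^{sL}Lg$ on $D(L)$ and the invariance of $D(L^k)$ under $e^{sL}$ are invoked.
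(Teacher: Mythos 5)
Your proposal is correct and follows essentially the same route as the paper: both establish that $s\mapsto e^{sL}f$ has derivatives $e^{sL}L^kf$ for $f\in D(L^k)$ and then apply Taylor's formula with integral (Bochner) remainder, bounding the remainder to get \eqref{teylexp-oc}. The only difference is that the paper cites the general vector-valued Taylor theorem (Bogachev--Smolyanov, Th.~12.4.4) where you re-derive it by induction and integration by parts, which is a harmless, self-contained substitute.
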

\begin{proof}
Denote $Q(t)=e^{tL}$. By definition of the generator of a $C_0$-semigroup (see definition \ref{generdef}), function $t\mapsto Q(t)f$ is differentiable at $t=0$ iff $f\in D(L)$, and $Q'(0)f=Lf$.
By the semigroup composition property (see definition \ref{semigrdef}) this implies the differentiability of this function at all $t\in[0,+\infty)$. 
The derivative at arbitrary time $t\geq 0$ can be found (using only the above-mentioned definitions) as follows:
\begin{multline*}
Q'(t)f=\lim_{h\to 0}\frac{1}{h}(Q(t+h)f-Q(t)f)=\lim_{h\to+0}\frac{1}{h}(Q(t)Q(h)f-Q(t)f)=\\
= Q(t)\lim_{h\to+0}\frac{1}{h}(Q(h)f-f) = Q(t)Q'(0)f = Q(t)Lf.
\end{multline*}
So the derivative is expressed in terms of the semigroup. Then for $f\in D(L)$ function $t\mapsto Q'(t)f=Q(t)Lf$ is differentiable at $t\in[0,+\infty)$ iff $Lf\in D(L)$ which is equivalent to $f\in D(L^2)$; moreover, $Q''(0)f=LLf=L^2f$. Repeating this argument we see that function $t\mapsto Q(t)f$ is $k$ times differentiable at $t\in[0,+\infty)$ iff $f\in D(L^k)$; for such $f$ we have $Q^{(k)}(t)f=Q(t)L^kf$. 

General Taylor's formula \cite[th. 12.4.4]{BS} after rescaling reads as
$$
F(t)=F(0)+F'(0)t+\dots+\frac{t^m}{m!}F^{(m)}(0)+\frac{1}{m!}\int_0^t(t-s)^mF^{(m+1)}(s)ds, 
$$
and for $F(t)=Q(t)f=e^{tL}f$, $F^{(k)}(t)f=e^{tL}L^kf$ becomes (\ref{teylexp-f}). 

Formula (\ref{teylexp-oc}) is a simple corollary of (\ref{teylexp-f}).\end{proof}

Now we are ready to state and prove the main result of the paper.

\begin{theorem}\label{mainth}
Suppose that the following three conditions are met:
\begin{enumerate}
\item %1. 
$C_0$-semigroup $(e^{tL})_{t\ge 0}$ with generator $(L,D(L))$ in Banach space $\mathcal{F}$ is given, 
such that for some $M_1\geq 1$, $w\geq 0$ and $T>0$ the inequality $\|e^{tL}\| \le M_1e^{wt}$ holds 
for all $t\in [0,T]$. 

\item %2. 
There exists a mapping $S\colon (0,T]\to\mathscr{L}(\mathcal{F})$ 
(i.e. $S(t)\colon \mathcal{F}\to \mathcal{F}$ is a bounded linear operator for each $t\in(0,T]$)
such that for some constant $M_2\geq 1$ the inequality $\|S(t)^k\| \le M_2e^{kwt}$ holds
for all $t\in (0,T]$ and all $k=1,2,3,\dots$.

\item %3. 
Numbers $m\in\{0,1,2,\dots\}$ and $p\in\{1,2,3,\dots\}$ are fixed. There exist a $(e^{tL})_{t\ge 0}$-invariant subspace $\mathcal{D}\subset D(L^{m+p})\subset \mathcal{F}$ (i.e. $e^{tL}(\mathcal{D})\subset \mathcal{D}$ for any $t\geq0$, for example $\mathcal{D}=D(L^{m+p})$ is well suited) and functions $K_j\colon (0,T]\to [0,+\infty)$, $j=0,1,\dots,m+p$ such that we have 
\begin{equation} \label{ocdiffmain}
\bigg\|S(t) f - \sum_{k=0}^{m} \frac{t^kL^k f}{k!}\bigg\| \le t^{m+1}\sum_{j=0}^{m+p}K_j(t)\|L^{j}f\|
\end{equation}
for all $t\in(0,T]$ and all $f\in \mathcal{D}$.
\end{enumerate}

Then the following two statements hold:
\begin{enumerate}
\item %1. 
For all $t>0$, all integer $n\geq t/T$ and all $f\in \mathcal{D}$ the estimate is true:
\begin{equation} \label{ocresdiff1main}
\|S(t/n)^n f - e^{tL}f\|\leq \frac{M_1M_2t^{m+1}e^{wt}}{n^{m}}\sum_{j=0}^{m+p}C_j(t/n)\|L^{j}f\|,
\end{equation}
where $C_{m+1}(t)=K_{m+1}(t)e^{-wt}+M_1/(m+1)!$ and $C_j(t)=K_j(t)e^{-wt}$ 
for all such $j\in\{0,1,\ldots,m+p\}$, that $j\neq m+1$.

\item %2. 
If $\mathcal{D}$ is dense in $\mathcal{F}$ and for all $j=0,1,\dots, m+p$ we have 
%$\lim_{\tau\to0}K_j(\tau)\tau^m=0$ 
$K_j(t)=o(t^{-m})$ as $t\to+0$,
then for all $g\in \mathcal{F}$ and all $\mathcal{T}>0$
the following equality is true:
\begin{equation} \label{eqanChernoff}
\lim_{\mathcal{T}/T\leq n\to\infty}\sup_{t\in(0,\mathcal{T}]}\left\|S(t/n)^ng-e^{tL}g\right\| = 0.
\end{equation}
\end{enumerate}
\end{theorem}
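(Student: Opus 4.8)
This is bookkeeping built on the two preceding lemmas. Inequality~\eqref{ocdiffmain} bounds $S(t)f$ against the polynomial $\sum_{k=0}^{m}t^kL^kf/k!$, and Lemma~\ref{lemmaTeylocexp} in the form~\eqref{teylexp-oc} bounds $e^{tL}f$ against the \emph{same} polynomial, with $\sup_{s\in[0,t]}\|e^{sL}\|\le M_1e^{wt}$ coming from hypothesis~1 (legitimate since $[0,t]\subseteq[0,T]$). Adding the two by the triangle inequality gives an inequality of exactly the shape~\eqref{ocdiff}, namely
$$\|S(t)f-e^{tL}f\|\le t^{m+1}\sum_{j=0}^{m+p}\widetilde C_j(t)\,\|L^{j}f\|,\qquad \widetilde C_j=K_j\ \ (j\ne m+1),\quad \widetilde C_{m+1}(t)=K_{m+1}(t)+M_1e^{wt}/(m+1)!$$
Hence Lemma~\ref{mainlemmaoc} applies verbatim; in its conclusion~\eqref{ocresdiff1} the factor $e^{-wt/n}\widetilde C_{m+1}(t/n)$ simplifies, the $e^{wt/n}$ hidden inside $\widetilde C_{m+1}$ cancelling the $e^{-wt/n}$ and leaving $K_{m+1}(t/n)e^{-wt/n}+M_1/(m+1)!$; the other $C_j$ match likewise, and this is precisely~\eqref{ocresdiff1main}.

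\textbf{Part~2.} The plan is to establish~\eqref{eqanChernoff} first for $g=f\in\mathcal{D}$ and then extend to general $g$ by density. Fix $\mathcal{T}>0$ and restrict to integers $n\ge\mathcal{T}/T$, so that $t/n\in(0,T]$ for every $t\in(0,\mathcal{T}]$ and Part~1 applies. In~\eqref{ocresdiff1main} I would bound $t^{m+1}e^{wt}\le\mathcal{T}^{m+1}e^{w\mathcal{T}}$ and rewrite the decisive factor as $t^{m+1}/n^m=t\cdot(t/n)^m$, so that each summand is at most $M_1M_2\,\mathcal{T}\,e^{w\mathcal{T}}\cdot(t/n)^mC_j(t/n)\,\|L^{j}f\|$. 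For $j\ne m+1$ one has $(t/n)^mC_j(t/n)\le(t/n)^mK_j(t/n)\le\sup_{0<s\le\mathcal{T}/n}s^mK_j(s)$, and this supremum tends to $0$ as $n\to\infty$ precisely because $K_j(s)=o(s^{-m})$ as $s\to+0$ (the supremum runs over an interval shrinking to $0$); for $j=m+1$ there is in addition the term $(t/n)^mM_1/(m+1)!\le(\mathcal{T}/n)^mM_1/(m+1)!\to0$. Therefore $\sup_{t\in(0,\mathcal{T}]}\|S(t/n)^nf-e^{tL}f\|\to0$, which is~\eqref{eqanChernoff} for $f\in\mathcal{D}$.

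For arbitrary $g\in\mathcal{F}$ I would run a routine density argument. The uniform bounds needed are $\|S(t/n)^n\|\le M_2e^{wt}\le M_2e^{w\mathcal{T}}$ for $t\in(0,\mathcal{T}]$ and $n\ge\mathcal{T}/T$ (hypothesis~2 with exponent $k=n$ at time $t/n$), and $M_{\mathcal{T}}:=\sup_{t\in[0,\mathcal{T}]}\|e^{tL}\|<\infty$ (any $C_0$-semigroup is norm-bounded on compact time intervals). Given $\varepsilon>0$, choose $f\in\mathcal{D}$ with $\|g-f\|<\varepsilon$ (possible since $\mathcal{D}$ is dense); then for all $t\in(0,\mathcal{T}]$ and $n\ge\mathcal{T}/T$,
$$\|S(t/n)^ng-e^{tL}g\|\le\big(M_2e^{w\mathcal{T}}+M_{\mathcal{T}}\big)\varepsilon+\|S(t/n)^nf-e^{tL}f\|.$$
Taking $\sup_{t\in(0,\mathcal{T}]}$, then $\limsup_{n\to\infty}$, and using the previous paragraph gives $\limsup_{n}\sup_{t}\|S(t/n)^ng-e^{tL}g\|\le\big(M_2e^{w\mathcal{T}}+M_{\mathcal{T}}\big)\varepsilon$; letting $\varepsilon\to0$ yields~\eqref{eqanChernoff}.

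\textbf{Anticipated main obstacle.} No single step is deep: Part~1 is bookkeeping on top of Lemmas~\ref{lemmaTeylocexp} and~\ref{mainlemmaoc}, and the density step is standard. The one delicate point is keeping the estimate uniform in $t\in(0,\mathcal{T}]$ while $n\to\infty$; the identity $t^{m+1}/n^m=t\cdot(t/n)^m$ together with the growth hypothesis $K_j(s)=o(s^{-m})$ is exactly what decouples the two, turning the $t$-dependence into the harmless factor $t\le\mathcal{T}$ and the $n$-dependence into $\sup_{0<s\le\mathcal{T}/n}s^mK_j(s)\to0$. (This reduction of the $j=m+1$ contribution uses $m\ge1$; the borderline value $m=0$, where $(\mathcal{T}/n)^m$ no longer decays, has to be discussed separately.)
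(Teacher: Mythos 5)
Your proof is correct for $m\ge1$ and follows the paper's own argument essentially verbatim: Part~1 is the same triangle-inequality combination of \eqref{ocdiffmain} with \eqref{teylexp-oc} followed by an application of Lemma~\ref{mainlemmaoc}, and Part~2 is the same density argument, merely reorganized (you prove convergence on $\mathcal{D}$ first and then pass to general $g$, while the paper runs a single $\varepsilon$--$\delta$ computation). Your handling of the uniformity in $t$ via $(t/n)^mK_j(t/n)\le\sup_{0<s\le\mathcal{T}/n}s^mK_j(s)\to0$ is in fact more explicit than the paper's one-line justification of the same point.

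Your worry about $m=0$ is well founded, and the paper does not address it: its proof of Part~2 asserts $\lim_{n\to\infty}C_j(t/n)(t/n)^m=0$ for every $j$, but for $j=m+1$ one has $C_{m+1}(s)s^m=K_{m+1}(s)e^{-ws}s^m+M_1s^m/(m+1)!$, whose second summand tends to $M_1\neq0$ when $m=0$. This is not merely a gap in the proof: Part~2 is false as stated for $m=0$. Take $\mathcal{F}=UC_b(\mathbb{R})$, $L$ the generator of the translation group, $\mathcal{D}=D(L)$, and $S(t)=I$ for all $t$; then all hypotheses hold with $m=0$, $p=1$, $K_0=K_1\equiv0$ (so certainly $K_j(t)=o(t^{0})$), yet $S(t/n)^ng=g\not\to e^{tL}g$ for non-constant $g$. (Part~1 survives at $m=0$: it then only asserts $\|f-e^{tL}f\|\le M_1M_2te^{wt}\|Lf\|$-type bounds, which are harmless.) So Part~2 should carry the restriction $m\ge1$, or a strengthened hypothesis on the $j=m+1$ term, and your decision to flag the borderline case rather than claim it is the right one.
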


\begin{proof}

1. With the help of estimate (\ref{ocdiffmain}) and lemma~\ref{lemmaTeylocexp} for each $t\in(0,T]$ and each $f\in\mathcal{D}\subset D(L^{m+p})$ we have
\begin{multline*}
\|S(t)f-e^{tL}f\|\leq\bigg\|S(t) f - \sum_{k=0}^{m} \frac{t^kL^k f}{k!}\bigg\|+
\bigg\|\sum_{k=0}^{m} \frac{t^kL^k f}{k!}-e^{tL}f\bigg\|\leq\\
\leq t^{m+1}\sum_{j=0}^{m+p}K_j(t)\|L^{j}f\| + \frac{t^{m+1}}{(m+1)!}\|L^{m+1}f\|\cdot\!\sup_{s\in[0,t]}\left\|e^{sL}\right\|\leq\\
\leq t^{m+1}\left(\sum_{j=0}^{m+p}K_j(t)\|L^{j}f\|+\frac{M_1e^{wt}}{(m+1)!}\|L^{m+1}f\|\right) = 
t^{m+1}\sum_{j=0}^{m+p}e^{wt}C_j(t)\|L^{j}f\|,
\end{multline*}
where $C_{m+1}(t)=K_{m+1}(t)e^{-wt}+M_1/(m+1)!$ and $C_j(t)=K_j(t)e^{-wt}$ for $j\neq m+1$.
Now we see that conditions of lemma~\ref{mainlemmaoc} are satisfied,
so~(\ref{ocresdiff1}) is true with $C_j(t/n)$ replaced by $e^{wt/n}C_j(t/n)$.
Then~(\ref{ocresdiff1main}) follows from~(\ref{ocresdiff1}). Item 1 is thus proved.

2. Suppose that arbitrary $g\in\mathcal{F}$, $\mathcal{T}>0$ and $\varepsilon>0$ are given. 
It is sufficient to find such integer $n_0\geq\mathcal{T}/T$ that 
for all $n>n_0$ and all $t\in(0,\mathcal{T}]$ we have  $\|S(t/n)^ng-e^{tL}g\| < \varepsilon$.

The set $\mathcal{D}$ is dense in $\mathcal{F}$, so for any $\delta>0$ there exists such $f\in\mathcal{D}$ 
that $\|f-g\|<\delta$. Then, using inequality~\eqref{ocresdiff1main} from item 1 proven above we obtain for all $t\in(0,\mathcal{T}]$ and all $n=1,2,3,\ldots$:
\begin{multline*}
\|S(t/n)^ng-e^{tL}g\|\leq\\
\leq\|S(t/n)^ng-S(t/n)^nf\|+\|S(t/n)^nf-e^{tL}f\|+\|e^{tL}f-e^{tL}g\|\leq\\
\leq \|S(t/n)^n\|\cdot\|g-f\|+ \|S(t/n)^nf-e^{tL}f\|+\|e^{tL}\|\cdot \|f-g\|\leq\\
\leq M_2e^{nwt/n}\delta + \frac{M_1M_2t^{m+1}e^{wt}}{n^{m}}\sum_{j=0}^{m+p}C_j(t/n)\|L^{j}f\| + M_1e^{wt}\delta \leq\\
\leq (M_1+M_2)e^{w\mathcal{T}}\delta + M_1M_2\mathcal{T} e^{w\mathcal{T}}\sum_{j=0}^{m+p}(t/n)^mC_j(t/n)\|L^{j}f\|.
\end{multline*}
Let us choose $n_0\geq\mathcal{T}/T$ such that 
$$
M_1M_2\mathcal{T} e^{w\mathcal{T}}\sum_{j=0}^{m+p}(t/n)^mC_j(t/n)\|L^{j}f\|<\varepsilon/2
$$ 
for all $n\geq n_0$ and $t\in(0,\mathcal{T}]$
(such $n_0$ exists due to $\lim_{n\to\infty}C_j(t/n)(t/n)^m=0$ 
thanks to condition $K_j(t)=o(t^{-m})$ when $t\to+0$ for all $j=0,1,\dots, m+p$). Then taking $\delta = \varepsilon e^{-w\mathcal{T}}/(2M_1+2M_2)$
we get:
$\|S(t/n)^ng-e^{tL}g\| < \varepsilon/2+\varepsilon/2 = \varepsilon$
for any $n\geq n_0$ and $t\in(0,\mathcal{T}]$.
The theorem is thus proved.
\end{proof}

\begin{remark}
Condition $\|S(t)^k\| \le M_2e^{kwt}$ may seem difficult to obtain, but if we have the estimate $\|S(t)\| \le e^{wt}$ then $\|S(t)^k\| \le M_2e^{kwt}$ is true for $M_2=1$.
\end{remark}

Let us consider a particular modeling example.
\begin{example} Suppose that $0<\varepsilon<1$ and for all $t\in(0;1]$, all $f\in D(L^3)$ we have $\|e^{tL}\|\leq e^{t}$, $\|S(t)\|\leq e^t$, $\|S(t)f -f-tLf-\frac{1}{2}t^2L^2f\|\leq t^{2+\varepsilon}\|L^3f\|$. Then in the theorem~\ref{mainth} we can take $\mathcal{D}=D(L^3)$, $m=2$, $M_1=M_2=w=1$, $K_0(t)=K_1(t)=K_2(t)=0$,  $K_3(t)=t^{\varepsilon-1}$ for any $t\in(0;1]$. So the estimate (\ref{ocresdiff1main}) of theorem~\ref{mainth} states that for any fixed $t>0$ the following estimate is true for all $f\in D(L^3)$ and all integer $n\geq t$, having the following asymptotic behaviour as $n\to\infty$:
\begin{multline*}
\|S(t/n)^nf-e^{tL}f\|\leq
\frac{t^{3}e^{t}}{n^{2}}\bigg(e^{-t/n}\Big(\frac{t}{n}\Big)^{\varepsilon-1}+\frac{1}{3!}\bigg)\|L^{3}f\| \le\\
\le e^t\bigg(\frac{t^{2+\varepsilon}}{n^{1+\varepsilon}} + \frac{t^{3}}{6n^2}\bigg)\|L^{3}f\| =
\frac{t^{2+\varepsilon}e^{t}}{n^{1+\varepsilon}}\|L^3f\|+O\Big(\frac{1}{n^2}\Big).
\end{multline*}
\end{example}

%Более содержательный пример применения теоремы имеется в доказательстве предложения 1 из следующего раздела.
A more meaningful example of the usage of the theorem~\ref{mainth} can be found in the proof of theorem~\ref{teorApprDU2} in the next section.

%%%%%%%%%%%%%%%%%%%%%%%%%%%%%%%%%%%%%%%%%%%%%%%%%%%%%%%%%%%%%%%%%%%%%%%%%%%%%%%%%%%%

%\section{Approximation of solutions of second-order parabolic equations}

\section{Example of application of the main result}

In this section we will show how one can use theorem~\ref{mainth} in practice. In subsection~\ref{UOFEONODVNODOSODO} we consider a second-order parabolic (diffusion type) equation and show that the solution of the Cauchy problem is given by a $C_0$-semigroup.
After that we take one of the known~\cite{RemAMC2018} Chernoff functions for its generator and prove the estimates of speed of convergence of the Chernoff approximations using theorem~\ref{mainth}.

\subsection{First step: estimation of derivatives in terms of a second-order differential operator}
%==============================================================
First, we prove theorem~\ref{teorOcVnLk} on estimation of the norms of derivatives of a function via the norms of powers of a second-order differential operator. To do this, we need the following two lemmas.
%----------------------------------------------------------
\begin{lemma} \label{lemOcU1} %1
For each twice differentiable function $u\colon{\mathbb R}\to{\mathbb R}$ and any $h>0$, the inequality holds
\begin{equation} \label{eqOcU1}
\sup_{x\in{\mathbb R}}|u'(x)| \le h\cdot\sup_{x\in{\mathbb R}}|u''(x)| + \frac1h\cdot\sup_{x\in{\mathbb R}}|u(x)|.
\end{equation}
\end{lemma}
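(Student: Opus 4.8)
The plan is to derive \eqref{eqOcU1} from the second-order Taylor expansion with the Lagrange form of the remainder, using the symmetric pair of increments $+h$ and $-h$ so that the first-derivative contributions reinforce each other while the function values stay under control.

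First I would fix $x\in\mathbb{R}$ and $h>0$ and write, for the twice differentiable function $u$ based at the point $x$,
$$u(x+h)=u(x)+hu'(x)+\frac{h^2}{2}u''(\xi_+),\qquad u(x-h)=u(x)-hu'(x)+\frac{h^2}{2}u''(\xi_-),$$
for suitable intermediate points $\xi_+\in(x,x+h)$ and $\xi_-\in(x-h,x)$. Subtracting the second identity from the first cancels the term $u(x)$ and doubles the term $hu'(x)$, which I would then solve for $u'(x)$:
$$u'(x)=\frac{u(x+h)-u(x-h)}{2h}-\frac{h}{4}\bigl(u''(\xi_+)-u''(\xi_-)\bigr).$$

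Next I would take absolute values and estimate each piece by a supremum over $\mathbb{R}$: the difference quotient is bounded by $\frac{1}{2h}\bigl(|u(x+h)|+|u(x-h)|\bigr)\le\frac{1}{h}\sup_{\mathbb{R}}|u|$, and the remainder term by $\frac{h}{4}\bigl(|u''(\xi_+)|+|u''(\xi_-)|\bigr)\le\frac{h}{2}\sup_{\mathbb{R}}|u''|$. This already gives the pointwise bound $|u'(x)|\le h\sup_{\mathbb{R}}|u''|+\frac{1}{h}\sup_{\mathbb{R}}|u|$ — in fact with the sharper constant $\frac{h}{2}$ in place of $h$ — and taking the supremum over $x\in\mathbb{R}$ yields \eqref{eqOcU1}. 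If either supremum on the right-hand side is infinite the inequality holds trivially, so this case needs no comment.

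I do not expect a genuine obstacle here; the only point worth checking is that the Lagrange form of Taylor's remainder requires merely the existence of $u''$ on the relevant interval (no continuity of $u''$), which is precisely the hypothesis, so \emph{no} additional regularity is used. The argument is a two-line application of Taylor's theorem, and the slack between the proven constant $\frac{h}{2}$ and the stated $h$ leaves ample room.
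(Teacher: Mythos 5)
Your proof is correct and uses essentially the same idea as the paper: a Taylor expansion with Lagrange remainder solved for $u'(x)$ and then bounded by suprema. The paper does it with a single one-sided expansion $u(x+2h)=u(x)+2hu'(x)+2h^2u''(\xi)$, which yields exactly the constants $h$ and $1/h$ of \eqref{eqOcU1}, whereas your symmetric two-sided version gives the slightly sharper constant $h/2$ in front of $\sup|u''|$ --- a harmless improvement.
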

%----------------------------------------------------------
\begin{proof}
Let us expand the function $u$ using the first-order Taylor formula
at the point $x\in {\mathbb R}$ for the increment $2h$ with remainder in Lagrange form:
$u(x+2h) = u(x)+u'(x)\cdot 2h+u''(\xi)\cdot (2h)^2/2$,
where $\xi\in(x,x+2h)$.
Express the derivative from this formula:
$u'(x) = -u''(\xi)\cdot h + (u(x+2h)-u(x))/(2h)$.
Taking supremums (with respect to $x\in{\mathbb R}$) of the absolute values, we get the estimate~\eqref{eqOcU1}.
\end{proof}
%---------------------------------------------------------

%---------------------------------------------------------
\begin{lemma} \label{lemRazlLq} %2
Let $q\in\{1,2,3,\ldots\}$, the functions $a, b,c\colon {\mathbb R}\to{\mathbb R}$ be differentiable $(2q-2)$ times,
the operator $A$ maps every twice differentiable function $u\colon{\mathbb R}\to{\mathbb R}$ to the function $Au = au''+bu'+cu$,
and the function $v\colon{\mathbb R}\to{\mathbb R}$ be differentiable $2q$ times.
Then the following three statements are true:

1) the function $A^q v$ can be written as
\begin{equation} \label{eqRazlLq}
A^q v = a^q v^{(2q)} + \sum_{i=0}^{2q-1} p_i\cdot v^{(i)},
\end{equation}
where functions $p_0,\ldots,p_{2q-1}$ are some homogeneous polynomials of degree $q$ of the functions $a$, $b$, $c$ and their derivatives of order no higher than $2(q-1)$;

2) the following inequality holds:
\begin{equation} \label{eqOcLqVi}
\|A^q v\| \leq \sum_{i=0}^{2q} C_i\cdot\|v^{(i)}\|,
\end{equation}
where $C_i=\|p_i\|$ for $i=0,\ldots,{2q-1}$, and $C_{2q}=\|a\|^q$;
%where the notation $\|u\| = \sup_{x\in{\mathbb R}}|u(x)|$ is used;

3) in the case $\inf_{x\in{\mathbb R}}|a(x)|>0$ the following estimate is correct:
\begin{equation} \label{eqOcV2qLqVi}
\|v^{(2q)}\| \le \Big\|\frac1{a^q}\Big\|\cdot\|A^q v\| +
\sum_{i=0}^{2q-1} \Big\|\frac{p_i}{a^q}\Big\|\cdot\|v^{(i)}\|.
\end{equation}
\end{lemma}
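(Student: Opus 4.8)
The plan is to prove part 1) by induction on $q$, and then derive parts 2) and 3) as essentially immediate consequences. For the base case $q=1$ we simply have $Av = av'' + bv' + cv$, which has the required form with $p_1 = b$, $p_0 = c$: these are homogeneous of degree $1$ in $a,b,c$ and their derivatives of order $\le 0$. For the inductive step, assume $A^q v = a^q v^{(2q)} + \sum_{i=0}^{2q-1} p_i v^{(i)}$ with the stated homogeneity of the $p_i$. Then $A^{q+1}v = A(A^q v) = a(A^q v)'' + b(A^q v)' + c(A^q v)$. I would differentiate the expression for $A^q v$ twice using the Leibniz (product) rule and collect terms by the order of the derivative of $v$ that appears. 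The highest-order term comes only from $a \cdot (a^q v^{(2q)})'' = a \cdot a^q v^{(2q+2)} + (\text{lower order in } v) = a^{q+1} v^{(2(q+1))} + \ldots$, which gives the leading coefficient. All remaining terms are of the form (polynomial in $a,b,c$ and their derivatives) times $v^{(i)}$ for $i \le 2q+1$; I need to verify each such polynomial is homogeneous of degree $q+1$ and involves only derivatives of $a,b,c$ of order $\le 2q$. Homogeneity is clear since multiplying $a,b,c$ by a scalar $\lambda$ multiplies $A$ by $\lambda$, hence $A^{q+1}$ by $\lambda^{q+1}$, so every coefficient polynomial scales by $\lambda^{q+1}$; the bound on the derivative orders follows because applying $A$ differentiates at most twice and the inductive hypothesis bounds the orders appearing in $p_i$ by $2(q-1)$, so after one more application we reach at most $2(q-1)+2 = 2q$.

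For part 2), I would simply take norms in \eqref{eqRazlLq} and apply the triangle inequality together with $\|fg\| \le \|f\|\cdot\|g\|$ (sup norm is multiplicative), obtaining $\|A^q v\| \le \|a^q\|\cdot\|v^{(2q)}\| + \sum_{i=0}^{2q-1}\|p_i\|\cdot\|v^{(i)}\|$, and then note $\|a^q\| = \|a\|^q$. This gives \eqref{eqOcLqVi} with $C_i = \|p_i\|$ for $i \le 2q-1$ and $C_{2q} = \|a\|^q$.

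For part 3), when $\inf_{x}|a(x)| > 0$ the function $1/a^q$ is bounded, so from \eqref{eqRazlLq} I can solve for the top derivative: $v^{(2q)} = \frac{1}{a^q}A^q v - \sum_{i=0}^{2q-1}\frac{p_i}{a^q}v^{(i)}$. Taking norms and again using multiplicativity of the sup norm yields \eqref{eqOcV2qLqVi}. The main obstacle is purely bookkeeping in the inductive step of part 1): one must be careful that the Leibniz expansion of $a(A^q v)''$ only raises the derivative order of the auxiliary coefficients by two and the multiplications by $a$, $b$, $c$ keep everything homogeneous of the correct degree — no genuinely hard estimate is involved, just disciplined tracking of which derivatives of $a,b,c$ can appear and with what polynomial degree.
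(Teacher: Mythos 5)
Your proposal is correct and follows essentially the same route as the paper: induction on $q$ for part 1) with the Leibniz rule to track degrees and derivative orders, then parts 2) and 3) by taking norms in \eqref{eqRazlLq} (dividing by $a^q$ for the latter). The only cosmetic difference is that you expand $A^{q+1}v$ as $A(A^qv)$ while the paper uses $A^q(Av)$; the bookkeeping is equivalent in either order.
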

%----------------------------------------------------------
\begin{proof}
1) The equality~\eqref{eqRazlLq} will be proved by mathematical induction on $q$.

The base case: $q=1$.
In this case, $A^q v = Av = av''+bv'+cv$, so~\eqref{eqRazlLq} is true with $p_0=c$ and $p_1=b$.

Induction step: $q\to q+1$.
Let us assume that the statement 1) of the lemma is
true for the number $q\in\{1,2,3,\ldots\}$ and show that it remains true when replacing $q$ with $q+1$.

Substituting the function $v$ by $Av$ in~\eqref{eqRazlLq}, we get:
\begin{equation*}
A^{q+1}v = A^q(Av) = a^q\cdot(Av)^{(2q)} + \sum_{i=0}^{2q-1} p_i\cdot (Av)^{(i)} =
\end{equation*}
\begin{equation*}
= a^q\cdot\Big((av'')^{(2q)}+(bv')^{(2q)}+(cv)^{(2q)}\Big) + \sum_{i=0}^{2q-1} p_i\cdot\Big((av'')^{(i)}+(bv')^{(i)}+(cv)^{(i)}\Big).
\end{equation*}
Next, using the Leibniz formula
$(uv)^{(i)} = \sum_{j=0}^i C_i^j u^{(i-j)}v^{(j)}$ in each term of the right hand side and selecting separately the first term, we find:
\begin{equation*} %\label{eqRazlLq}
\begin{split}
A^{q+1}v &= a^{q+1}v^{(2q+2)}
+ \sum_{j=0}^{2q-1} a^q C_{2q}^j a^{(2q-j)}v^{(j+2)} +\\
&+ \sum_{j=0}^{2q} a^q C_{2q}^j\cdot\Big(b^{(2q-j)}v^{(j+1)}
+ c^{(2q-j)}v^{(j)}\Big) +\\
&+ \sum_{i=0}^{2q-1} p_i \sum_{j=0}^i
C_i^j\cdot\Big( a^{(i-j)}v^{(j+2)} +b^{(i-j)}v^{(j+1)} +c^{(i-j)}v^{(j)}\Big).
\end{split}
\end{equation*}
This shows that the function $A^{q+1}v$ can be written in the form similar to~\eqref{eqRazlLq}:
\begin{equation*} %\label{eqRazlLq}
A^{q+1} v = a^{q+1} v^{(2q+2)} + \sum_{i=0}^{2q+1} r_i\cdot v^{(i)},
\end{equation*}
where functions $r_0,\ldots,r_{2q+1}$ are some homogeneous polynomials of degree $q+1$ of the functions $a$, $b$, $c$ and their derivatives of order no higher than $2q$.
Thus, the induction step is completed and the statement of item 1) of the lemma is proved.

\medskip
2) Inequality~\eqref{eqOcLqVi} immediately follows from the formula~\eqref{eqRazlLq}.

\medskip
3) Expressing the function $v^{(2q)}$ from the equality~\eqref{eqRazlLq} and evaluating its norm, we obtain the required inequality~\eqref{eqOcV2qLqVi}.
\end{proof}
%----------------------------------------------------------

%----------------------------------------------------------
\begin{example}
For $q=2$, the decomposition~\eqref{eqRazlLq} has the following form:
\begin{equation*}
\begin{split}
A^qv &= A^2v = (av''+bv'+cv)''a + (av''+bv'+cv)'b + (av''+bv'+cv)c =\\
&= a^2 v^{IV} + (2aa'+2ab)\cdot v''' + (aa''+a'b+b^2+2ab'+2ac)\cdot v'' +\\
&+ (ab''+bb'+2ac'+2bc)\cdot v' + (ac''+bc'+c^2)\cdot v.
\end{split}
\end{equation*}
\end{example}
%---------------------------------------------------------

The following theorem helps use theorem~\ref{mainth}.
%---------------------------------------------------------
\begin{theorem} \label{teorOcVnLk}
Suppose $n\in\{0,1,2,\ldots\}$, the functions $a,b,c\colon{\mathbb R}\to{\mathbb R}$ are differentiable $2\lfloor(n-1)/2]$ times
and the inequality $\inf_{x\in{\mathbb R}}|a(x)|>0$ holds.
Suppose, in addition, that the operator $A$ maps each twice differentiable function $u\colon{\mathbb R}\to{\mathbb R}$ to the function $Au = au''+bu'+cu$.
Then there exist nonnegative constants $C_0,C_1,\ldots,C_{\lfloor(n+1)/2\rfloor}$, such that for any $2\lfloor(n+1)/2\rfloor$ times differentiable function $v\colon{\mathbb R}\to{\mathbb R}$, the following inequality is true:
\begin{equation} \label{eqOcVnLk}
\|v^{(n)}\| \le \sum_{k=0}^{\lfloor(n+1)/2\rfloor} C_k\|A^k v\|.
\end{equation}
\end{theorem}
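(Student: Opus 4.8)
The plan is to proceed by strong induction on $n$. The even cases are handled directly by Lemma~\ref{lemRazlLq}, item~3), and each odd case is reduced to the neighbouring even derivatives through the interpolation inequality of Lemma~\ref{lemOcU1}; in every step all auxiliary derivative orders will be kept strictly below $n$, so the induction hypothesis applies. All sup-norms $\|1/a^q\|$ and $\|p_i/a^q\|$ that occur are finite, because $\inf_x|a(x)|>0$ makes $1/a^q$ bounded and the $p_i$ furnished by Lemma~\ref{lemRazlLq} are polynomial expressions in $a,b,c$ and in their derivatives of the orders the hypothesis on $n$ makes available. The base case $n=0$ is immediate: $\lfloor(n+1)/2\rfloor=0$ and $\|v^{(0)}\|=\|v\|=\|A^0v\|$, so $C_0=1$ works.

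\textbf{Even step ($n=2q$, $q\ge1$).} Here $\lfloor(n+1)/2\rfloor=q$, and the hypothesis grants $2\lfloor(n-1)/2\rfloor=2q-2$ derivatives of $a,b,c$ and $2\lfloor(n+1)/2\rfloor=2q$ derivatives of $v$ — exactly what Lemma~\ref{lemRazlLq} requires for index $q$. Item~3) of that lemma gives
$$\|v^{(2q)}\|\le\Big\|\tfrac1{a^q}\Big\|\,\|A^qv\|+\sum_{i=0}^{2q-1}\Big\|\tfrac{p_i}{a^q}\Big\|\,\|v^{(i)}\|.$$
Each index $i$ appearing here satisfies $i\le 2q-1<n$ and $\lfloor(i+1)/2\rfloor\le q$, so the induction hypothesis bounds $\|v^{(i)}\|$ by a combination of $\|A^kv\|$, $k=0,\dots,q$; substituting and gathering the coefficient of each $\|A^kv\|$ yields the constants $C_0,\dots,C_q$.

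\textbf{Odd step ($n=2q+1$).} Now $\lfloor(n+1)/2\rfloor=q+1$ and $v$ is $2q+2$ times differentiable, so $u:=v^{(2q)}$ is twice differentiable; Lemma~\ref{lemOcU1} applied to $u$ with a parameter $h>0$ gives $\|v^{(2q+1)}\|\le h\|v^{(2q+2)}\|+\tfrac1h\|v^{(2q)}\|$. The hypothesis supplies $2\lfloor(n-1)/2\rfloor=2q=2(q+1)-2$ derivatives of $a,b,c$, so Lemma~\ref{lemRazlLq}, item~3), applies with index $q+1$:
$$\|v^{(2q+2)}\|\le\Big\|\tfrac1{a^{q+1}}\Big\|\,\|A^{q+1}v\|+\sum_{i=0}^{2q+1}\Big\|\tfrac{p_i}{a^{q+1}}\Big\|\,\|v^{(i)}\|,$$
where the $p_i$ are the coefficients attached to $A^{q+1}$. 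The term $i=2q+1$ on the right is precisely $\|v^{(2q+1)}\|$; inserting this bound into the previous one and transferring that term to the left yields
$$\Big(1-h\Big\|\tfrac{p_{2q+1}}{a^{q+1}}\Big\|\Big)\|v^{(2q+1)}\|\le h\Big\|\tfrac1{a^{q+1}}\Big\|\,\|A^{q+1}v\|+h\sum_{i=0}^{2q}\Big\|\tfrac{p_i}{a^{q+1}}\Big\|\,\|v^{(i)}\|+\tfrac1h\,\|v^{(2q)}\|.$$
Fixing $h>0$ so small that $1-h\|p_{2q+1}/a^{q+1}\|\ge\tfrac12$ and dividing, every surviving $\|v^{(i)}\|$ has $i\le 2q<n$ with $\lfloor(i+1)/2\rfloor\le q+1$, so the induction hypothesis applies; collecting the coefficients of $\|A^kv\|$ for $k=0,\dots,q+1$ gives~\eqref{eqOcVnLk}.

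\textbf{Main obstacle.} The one real subtlety is the odd step: bounding $\|v^{(2q+1)}\|$ seems to need the \emph{higher} derivative $\|v^{(2q+2)}\|$, which would wreck an induction on $n$. The resolution is that expanding $\|v^{(2q+2)}\|$ via Lemma~\ref{lemRazlLq} reproduces $\|v^{(2q+1)}\|$ itself with coefficient of order $h$, which is absorbed on the left-hand side once $h$ is taken small enough, after which only derivatives of order at most $2q<n$ remain. The rest is pure bookkeeping: checking at each step that the smoothness the hypothesis allots to $a,b,c$ and to $v$ matches exactly what Lemmas~\ref{lemOcU1} and~\ref{lemRazlLq} demand (as indicated above, it does), and tracking how the finitely many constants combine.
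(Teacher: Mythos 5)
Your proposal is correct and follows essentially the same route as the paper: induction on $n$, with the even case handled directly by item~3) of Lemma~\ref{lemRazlLq} and the odd case handled by combining Lemma~\ref{lemOcU1} with Lemma~\ref{lemRazlLq} and absorbing the reappearing $\|v^{(n)}\|$ term on the left by taking $h$ small. The smoothness bookkeeping you carry out also matches the paper's hypotheses exactly.
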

%---------------------------------------------------------
\begin{proof}
We apply the induction on parameter $n$.

1) The base case: $n=0$.
In this case~\eqref{eqOcVnLk} has the form $\|v\|\leq C_0\|v\|$,
so we can take $C_0=1$.

2) The induction step. Let the statement of the theorem be proved for all $n\le m-1$. 
We must prove it for $n=m$.

Consider two possible cases: $m$ is odd and $m$ is even.

2.1) Let $m$ be odd. Then putting $u=v^{(m-1)}$ in lemma~\ref{lemOcU1} we have for any $h>0$:
\begin{equation} \label{eqOcVmVm1h}
\|v^{(m)}\| \le h\|v^{(m+1)}\| + \frac{1}{h}\|v^{(m-1)}\|.
\end{equation}
According to item~3) of the lemma~\ref{lemRazlLq} with $q=(m+1)/2$ the following inequality is satisfied for some nonnegative constants $\alpha_0,\ldots, \alpha_m$:
\begin{equation*} %\label{eqOcVmLm2Vi}
\|v^{(m+1)}\| \le \Big\|\frac1{a^{(m+1)/2}}\Big\|\cdot\|A^{(m+1)/2}v\| +
\sum_{i=0}^{m} \alpha_i\|v^{(i)}\|.
\end{equation*}
Inserting this inequality into~\eqref{eqOcVmVm1h} we get:
\begin{equation*} %\label{eqOcVmLm2Vi}
\begin{split}
\|v^{(m)}\| &\le h\Big\|\frac1{a^{(m+1)/2}}\Big\|\cdot\|A^{(m+1)/2}v\| +
\sum_{i=0}^{m-2} h\alpha_i\|v^{(i)}\| +\\
&+ \Big(h\alpha_{m-1}+\frac1h\Big)\|v^{(m-1)}\| + h\alpha_m\|v^{(m)}\|.
\end{split}
\end{equation*}
From here we have:
\begin{equation} \label{eqOcVmLm12ViPodg}
\begin{split}
(1-h\alpha_m)\|v^{(m)}\| &\le h\Big\|\frac1{a^{(m+1)/2}}\Big\|\cdot\|A^{(m+1)/2}v\| +
\sum_{i=0}^{m-2} h\alpha_i\|v^{(i)}\| +\\
&+ \Big(h\alpha_{m-1}+\frac1h\Big)\|v^{(m-1)}\|.
\end{split}
\end{equation}
Choose $h>0$ so that $1-h\alpha_m>0$
(we can take $h=1$ when $\alpha_m=0$, and $h=1/(2\alpha_m)$ when $\alpha_m>0$).
Then, expressing $\|v^{(m)}\|$ from~\eqref{eqOcVmLm12ViPodg}, we get that for some nonnegative constants
$\beta_0,\ldots,\beta_{m-1}$ the following estimate is correct
\begin{equation} \label{eqOcVmLm12Vi}
\|v^{(m)}\| \le \beta_m\|A^{(m+1)/2}v\| +
\sum_{i=0}^{m-1} \beta_i\|v^{(i)}\|.
\end{equation}
Due to the induction assumption, all values $\|v^{(i)}\|$, $i=0,\ldots,m-1$ that are included into the right-hand side of~\eqref{eqOcVmLm12Vi}, can be evaluated via linear combinations of the values $\|A^k v\|$, $k=0,\ldots, (m-1)/2$.
So, from~\eqref{eqOcVmLm12Vi} it follows that for some nonnegative constants $C_0,\ldots,C_{(m+1)/2}$ the following needed estimate of the type~\eqref{eqOcVnLk} is true:
\begin{equation*} %\label{eqOcVnLk}
\|v^{(m)}\| \le \sum_{k=0}^{(m+1)/2} C_k\|A^k v\|.
\end{equation*}

2.2) Let $m$ be even.
Then, according to item~3) of the lemma~\ref{lemRazlLq} with $q=m/2$ we have for some nonnegative constants $\alpha_0,\ldots,\alpha_{m-1}$:
\begin{equation} \label{eqOcVmLm2Vi}
\|v^{(m)}\| \le \Big\|\frac1{a^{m/2}}\Big\|\cdot\|A^{m/2}v\| +
\sum_{i=0}^{m-1} \alpha_i\|v^{(i)}\|.
\end{equation}
Due to the induction assumption, all values $\|v^{(i)}\|$, $i=0,\ldots,m-1$ that are included into the right-hand side of~\eqref{eqOcVmLm2Vi}, can be evaluated via linear combinations of the values $\|A^k v\|$, $k=0,\ldots,m/2$. So, from~\eqref{eqOcVmLm2Vi} it follows that for some nonnegative constants $C_0,\ldots,C_{m/2}$ the following needed estimate of the type~\eqref{eqOcVnLk} is true:
\begin{equation*} %\label{eqOcVnLk}
\|v^{(m)}\| \le \sum_{k=0}^{m/2} C_k\|A^k v\|.
\end{equation*}

So the induction step is done for both cases ($m$ is odd and $m$ is even).
Then the inequality~\eqref{eqOcVnLk}, and with it the whole theorem~\ref{teorOcVnLk} are proved.
\end{proof}

%========================================================================
%\subsection{Usage of estimates of the norms of derivatives via the norms of degrees of a second-order differential operator}

\subsection{Second step: estimation of the rate of convergence of Chernoff approximations to solution of second-order parabolic PDEs}
\label{UOFEONODVNODOSODO}
%========================================================================

Now, using theorems \ref{mainth} and~\ref{teorOcVnLk}, as well as the results of the book~\cite{Krylov}, 
we prove a theorem on the approximation of solutions to the Cauchy problem for second-order parabolic partial differential equations (PDEs) via the Chernoff function.

Recall that we use notation $UC_b(\mathbb{R})$ for the Banach space of all real-valued bounded uniformly continuous functions on $\mathbb{R}$.
Similarly, $UC_b^n(\mathbb{R})$ denotes the space of all such functions $u\in UC_b(\mathbb{R})$, that $u',\ldots,u^{(n)}\in UC_b(\mathbb{R})$.
Let us denote by symbol $HC_b(\mathbb{R})$ the space of all H\"older continuous functions $u\colon\mathbb{R}\to\mathbb{R}$.
Next, for each $n\in\{1,2,3,\ldots\}$ let us denote by symbol $HC_b^n(\mathbb{R})$ the space of all such functions $u\in HC_b(\mathbb{R})$, that $u',\ldots,u^{(n)}\in HC_b(\mathbb{R})$.
We denote by symbol $C_b^\infty(\mathbb{R})$ the space of all real-valued functions on $\mathbb{R}$ bounded with all derivatives.

\begin{remark} \label{remUCnDenseUC}
It is clear that $C_b^\infty(\mathbb{R})\subset HC_b^n(\mathbb{R})\subset UC_b^n(\mathbb{R})\subset UC_b(\mathbb{R})$. So the spaces $HC_b^n(\mathbb{R})$ and $UC_b^n(\mathbb{R})$ are dense in $UC_b(\mathbb{R})$, because $C_b^\infty(\mathbb{R})$ is dense in $UC_b(\mathbb{R})$ (this is proved e.g. in~\cite[lemma 1]{RemJMP}).
\end{remark}

%---------------------------------------------------------
\begin{theorem} \label{teorApprDU2}
Suppose that the following three conditions are met:
\begin{enumerate}
\item %1.
Numbers $m,q\in\{1,2,3,\dots\}$ are  fixed, and $\hat{q}=2\lfloor(q+1)/2\rfloor$.
Functions $a,b,c$ from the class $HC_b^{2m+\hat{q}-2}(\mathbb{R})$ are given, 
such that $\inf_{x\in{\mathbb R}}a(x)>0$. 
Operator $A$ on $UC_b(\mathbb{R})$ with domain $D(A)=HC_b^2(\mathbb{R})$ is defined by the formula
$Au = au''+bu'+cu$.

\item %2. 
Numbers $T>0$, $M\geq 1$ and $\sigma\geq0$ are given. For any $t\in(0,T]$ bounded linear operator $S(t)$ on $UC_b(\mathbb{R})$ is defined
such that $\|S(t)^k\| \le Me^{k\sigma t}$ for any $k=1,2,3,\dots$.

\item %3. 
There exist constant $\alpha\le1$ and nonnegative constants $K_0,K_1,\dots,K_{2m+q}$ such that for all $t\in(0,T]$ and all $f\in UC_b^{2m+q}(\mathbb{R})$ we have 
\begin{equation} \label{eqApprDU2_1}
\bigg\|S(t)f - \sum_{k=0}^{m}\frac{t^kA^kf}{k!}\bigg\| \le t^{m+\alpha}\sum_{i=0}^{2m+q}B_i\|f^{(i)}\|.
\end{equation}
\end{enumerate}

Then the following three statements hold:
\begin{enumerate}
\item %1. 
The closure $\overline{A}$ of operator $A$ in Banach space $UC_b(\mathbb{R})$ is a generator of $C_0$-semigroup  $(e^{t\overline{A}})_{t\ge 0}$ in $UC_b(\mathbb{R})$, and the condition $\|e^{t\overline{A}}\|\le e^{\gamma t}$ for all $t\geq0$ is satisfied, where $\gamma=\sup_{x\in\mathbb{R}}c(x)$.

\item %2. 
Let $w=max(\sigma,\gamma,0)$. Then nonnegative constants $C_0,C_1,\ldots,C_{2m+\hat{q}}$ exists (which are independent of $t$, $T$ and $n$) such that for all $t>0$, all  integer $n\geq n_{\alpha,t}$ (where $n_{\alpha,t}=t/T$ if $\alpha=1$ and $n_{\alpha,t}=max(t/T,t)$ if $\alpha<1$) and all $f\in UC_b^{2m+\hat{q}}(\mathbb{R})$ we have
\begin{equation} \label{eqApprDU2_2}
\big\|S(t/n)^n f - e^{t\overline{A}}f\big\|\leq \frac{Mt^{m+\alpha}e^{wt}}{n^{m-1+\alpha}}\sum_{i=0}^{2m+\hat{q}}C_i\|f^{(i)}\|.
\end{equation}

\item %3. 
If $\alpha>1-m$ then for all $\mathcal{T}>0$ and all $g\in UC_b(\mathbb{R})$ the following equality is true:
\begin{equation} \label{eqApprDU2_3}
\lim_{\mathcal{T}/T\leq n\to\infty}\sup_{t\in(0,\mathcal{T}]}\big\|S(t/n)^ng-e^{t\overline{A}}g\big\| = 0.
\end{equation}
\end{enumerate}
\end{theorem}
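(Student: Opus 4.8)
The plan is to deduce all three statements from theorem~\ref{mainth}, after using the parabolic theory of~\cite{Krylov} to produce the semigroup $(e^{t\overline A})_{t\ge0}$ and theorem~\ref{teorOcVnLk} to rewrite the derivative-based bound~\eqref{eqApprDU2_1} in the $\overline A$-power form~\eqref{ocdiffmain} required by theorem~\ref{mainth}. For the first statement I would invoke the well-posedness, in spaces of bounded (H\"older) continuous functions, of the Cauchy problem for the uniformly elliptic operator $Au=au''+bu'+cu$ with H\"older coefficients: the results of~\cite{Krylov} give that $\overline A$ generates a $C_0$-semigroup on $UC_b(\mathbb{R})$. For the bound $\|e^{t\overline A}\|\le e^{\gamma t}$ with $\gamma=\sup_xc(x)$, I would write $u(t,\cdot)=e^{t\overline A}f$ and pass to $v(t,x)=e^{-\gamma t}u(t,x)$, which solves $v_t=av''+bv'+(c-\gamma)v$ with zeroth-order coefficient $c-\gamma\le0$; the maximum principle for this equation gives $\sup_x|v(t,x)|\le\sup_x|f(x)|$, that is $\|e^{t\overline A}f\|\le e^{\gamma t}\|f\|$ on a dense set and hence on all of $UC_b(\mathbb{R})$.

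For the second statement I would apply theorem~\ref{mainth} with $L=\overline A$, $M_1=1$, $M_2=M$, $w=\max(\sigma,\gamma,0)$ and $p=\hat q/2=\lfloor(q+1)/2\rfloor$. Condition~1 of theorem~\ref{mainth} follows from the first statement, since $\gamma\le w$ gives $e^{\gamma t}\le e^{wt}$ for $t\ge0$; condition~2 follows from $\|S(t)^k\|\le Me^{k\sigma t}\le Me^{kwt}$. For condition~3 one fixes a dense, $(e^{t\overline A})$-invariant subspace $\mathcal D\subset D(\overline A^{m+p})$ of sufficiently smooth functions (built from $HC_b^{2m+\hat q}(\mathbb{R})$ by the parabolic/elliptic regularity of~\cite{Krylov}, using the hypothesis $a,b,c\in HC_b^{2m+\hat q-2}(\mathbb{R})$), on which $\overline A^k$ agrees with the classical expression $A^k$ for $k\le m+p$. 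On $\mathcal D$, applying theorem~\ref{teorOcVnLk} to each derivative $f^{(i)}$, $i=0,\dots,2m+q$ — the coefficient regularity $HC_b^{2m+\hat q-2}$ being exactly what that theorem needs for derivatives up to that order — converts $\sum_{i=0}^{2m+q}B_i\|f^{(i)}\|$ into $\sum_{j=0}^{m+p}D_j\|A^jf\|$ for suitable constants $D_j$, so~\eqref{eqApprDU2_1} takes the form~\eqref{ocdiffmain} with $K_j(t)=D_jt^{\alpha-1}$.

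Then theorem~\ref{mainth} yields~\eqref{ocresdiff1main}. Pulling the factor $(t/n)^{\alpha-1}$ out of $C_j(t/n)$ converts $t^{m+1}/n^{m}$ into $t^{m+\alpha}/n^{m-1+\alpha}$, whereas the extra summand $M_1/(m+1)!$ inside $C_{m+1}$ produces a term of order $t^{m+1}/n^{m}$ that is dominated by a constant times the main term exactly when $(t/n)^{1-\alpha}\le1$, that is when $n\ge t$ (no extra constraint is needed when $\alpha=1$); this, together with the requirement $t/n\in(0,T]$, is the source of the bound $n\ge n_{\alpha,t}$. Finally item~2) of lemma~\ref{lemRazlLq} converts each $\|\overline A^jf\|=\|A^jf\|$, $j\le m+p$, back into $\sum_{i=0}^{2m+\hat q}(\cdots)\|f^{(i)}\|$, giving~\eqref{eqApprDU2_2} on $\mathcal D$; extending by density of $\mathcal D$ and continuity of both sides of~\eqref{eqApprDU2_2} in the $UC_b^{2m+\hat q}$-norm yields it for every $f\in UC_b^{2m+\hat q}(\mathbb{R})$. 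The third statement then follows from~\eqref{eqanChernoff} of theorem~\ref{mainth}: $\mathcal D$ is dense in $UC_b(\mathbb{R})$, and $K_j(t)=D_jt^{\alpha-1}=o(t^{-m})$ as $t\to+0$ holds precisely when $\alpha-1+m>0$, i.e.\ $\alpha>1-m$, whence~\eqref{eqApprDU2_3}.

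The bookkeeping of constants and of the powers of $t$ and $n$ above is routine. The two genuinely delicate points are: (i) extracting the first statement, and in particular the sharp bound $\|e^{t\overline A}\|\le e^{\gamma t}$, cleanly from~\cite{Krylov}; and (ii) producing an $(e^{t\overline A})$-invariant subspace $\mathcal D$ that simultaneously lies in $D(\overline A^{m+p})$, carries the estimate~\eqref{ocdiffmain}, and is dense enough (in the $UC_b^{2m+\hat q}$-norm) to recover the conclusion for all $f\in UC_b^{2m+\hat q}(\mathbb{R})$. It is precisely in (ii) that the regularity hypothesis $a,b,c\in HC_b^{2m+\hat q-2}(\mathbb{R})$ — chosen to match both theorem~\ref{teorOcVnLk} and the number of differentiations of the PDE one must control — is used.
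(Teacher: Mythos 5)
Your proposal is correct and follows essentially the same route as the paper: Krylov's parabolic theory plus the shift $A\mapsto A-\gamma$ (equivalently, your substitution $v=e^{-\gamma t}u$ and the maximum principle) for statement~1, theorem~\ref{teorOcVnLk} to pass from $\|f^{(i)}\|$ to $\|A^{j}f\|$ with $K_j(t)=D_jt^{\alpha-1}$, theorem~\ref{mainth} for the estimate (including the same $n\ge t$ bookkeeping for the $1/(m+1)!$ term), and item~2 of lemma~\ref{lemRazlLq} to convert back to derivative norms. The only cosmetic difference is that the paper takes $\mathcal{D}=UC_b^{2m+\hat{q}}(\mathbb{R})$ directly as the invariant subspace rather than a smaller smooth subspace extended by density.
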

%----------------------------------------------------------
\begin{proof}
1). The proof of the first statement is divided into two parts.

1.1) First, assume that $c(x)\leq0$ for all $x\in\mathbb{R}$.
Then because of~\cite[theorem~8.2.1 on p.~111 and corollary~8.3.1 on p.~114]{Krylov} for any function $f\in HC_b(\mathbb{R})$ the equation $u'_t(t,\cdot)=Au(t,\cdot)$, $t>0$ has a
unique solution $u(t,\cdot)=u_f (t,\cdot)\in HC_b^2 (\mathbb{R})$, $t>0$,
such that $\lim_{t\to+0}\|u_f(t,\cdot)-f\| = 0$. 
Moreover, for all $t>0$ the inequality $\|u_f(t,\cdot)\|\leq\|f\|$ is satisfied.
Based on the above, for any function $f\in HC_b(\mathbb{R})$ let $Q(t)f=u_f(t,\cdot)$ if $t>0$ and $Q(t)f=f$ if $t=0$.
Thus, the following relations will be fulfilled
\begin{equation}\label{eqDifTtfIsLTtf} %23
(Q(t)f)'_t = A(Q(t)f) \quad\text{for all }t>0\text{ and all }f\in HC_b(\mathbb{R}),
\end{equation}
\begin{equation}\label{eqHC2bInvDifTt} %24
Q(t)f\in HC^2_b(\mathbb{R}) \quad\text{for all }t>0\text{ and all }f\in HC_b(\mathbb{R}),
\end{equation}
\begin{equation}\label{eqLim0TtfIsf} %25
\lim_{t\to+0} Q(t)f = f \quad\text{for all }f\in HC_b(\mathbb{R}).
\end{equation}
Since the solution $u_f$ is unique, then for all $t,s\geq0$, and all $f\in HC_b(\mathbb{R})$ the semigroup property $Q(t+s)f=Q(t)Q(s)f$ holds.
So, $(Q(t))_{t\geq0}$ is a $C_0 $-semigroup on the space $HC_b (\mathbb{R})$, 
with estimate $\|Q(t)\|\leq1$ for all $t\geq0$.

Due to $HC_b(\mathbb{R})$ being dense in $UC_b(\mathbb{R})$ (see remark~\ref{remUCnDenseUC}), the operators $Q(t)$ for any $t\geq0$ can be continued by continuity over the whole space $UC_b (\mathbb{R})$, preserving the norm.
So we get that $(Q(t))_{t\geq0}$ is a contraction $C_0$-semigroup on the space $UC_b (\mathbb{R})$.

Let's show that the generator $L$ of the semigroup $(Q(t))_{t\geq0}$ coincides with the closure $\overline{A}$ of the operator $A$.
To do this, first recall (see definition~1.2 or~\cite[lemma~1.1]{EN}), that
$D(L) = \{\varphi\in UC_b(\mathbb{R}) \,\big|\, \lim_{s\to +0}(Q(s)\varphi-\varphi)/s \text{ exists}\}$, 
and $L\varphi = \lim_{s\to +0}(Q(s)\varphi-\varphi)/s$ for any $\varphi\in D(L)$.
Let $f\in HC_b(\mathbb{R})$ and $t>0$. Then by virtue of semigroup property and the equality~\eqref{eqDifTtfIsLTtf} we have:
$$
\lim_{s\to +0} \frac{Q(s)Q(t)f-Q(t)f}{s} = \lim_{s\to +0} \frac{Q(t+s)f-Q(t)f}{s} =
$$
$$
= (Q(t)f)'_t \stackrel{by\;(\ref{eqDifTtfIsLTtf})}{=} A(Q(t)f).
$$
From this it follows that 
\begin{equation}\label{eqTtfInDA} %26
Q(t)f\in D(L),\ L(Q(t)f)=A(Q(t)f) \text{ for all }t>0,f\in HC_b(\mathbb{R}).
\end{equation}

Now let us assume that $f\in HC^2_b(\mathbb{R}) \subset HC_b(\mathbb{R})$. Thanks to~\cite[remark~8.3.2 on p.~114]{Krylov} we have:
$A(Q(t)f) = [Q(t)](Af)$ for all $t>0$.
Then from this and from the formulas \eqref{eqTtfInDA}, \eqref{eqLim0TtfIsf} the continued equality follows:
$$
\lim_{n\to\infty} L(Q(1/n)f) \stackrel{by\;(\ref{eqTtfInDA})}{=}
\lim_{n\to\infty} A(Q(1/n)f) = \lim_{n\to\infty} [Q(1/n)](Af) \stackrel{by\;(\ref{eqLim0TtfIsf})}{=} Af.
$$
So for each $f\in HC^2_b(\mathbb{R})$ the following two relations are correct:
$$
\lim_{n\to\infty} L(Q(1/n)f) = Af\quad \text{and}\quad\lim_{n\to\infty} Q(1/n)f = f.
$$
Since the generator $L$ is closed~\cite[theorem~1.4]{EN}, it follows that $f\in D(L)$ and $Lf=Af$.
Thus, the restriction of the operator $L$ to the subspace $HC^2_b(\mathbb{R})$ matches the operator $A$, i.e. $L|_{HC^2_b(\mathbb{R})} = A$.

Subspace $HC^2_b(\mathbb{R})$ is invariant under the semigroup $(Q(t))_{t\geq0}$ (by virtue of~\eqref{eqHC2bInvDifTt}), 
and is dense in $UC_b(\mathbb{R})$. Therefore according to~\cite[prop.~1.7 of ch.~2]{EN}, 
subspace $HC^2_b(\mathbb{R})$ is the core of the generator $L$. According to the definition of the core (see definition~\ref{defCore} or~\cite[def.~1.6 of ch.~2]{EN}) this means that $HC^2_b(\mathbb{R})$ is dense in $D(L)$ for the graph norm $\|x\|_L=\|x\|+\|Lx\|$.
From this and from the equality $L|_{HC^2_b(\mathbb{R})} = A$ it follows that $L=\overline{A}$.

1.2) Let us proceed to the general case, where the function $c(x)$ can have its sign changed.
By virtue of the equality $\gamma=\sup_{x\in\mathbb{R}}c(x)$ we have $c(x)-\gamma\leq0$ for all $x\in\mathbb{R}$.
Using the results of item~1.1) of the proof for linear operator $(A-\gamma)u = au''+bu'+(c-\gamma)u$, 
we get that the closure $\overline{A-\gamma} = \overline{A}-\gamma$ of operator $A-\gamma$ 
is the generator of $C_0$-semigroup  $(e^{t(\overline{A}-\gamma)})_{t\ge 0}$ in Banach space $UC_b(\mathbb{R})$, 
and the condition $\|e^{t(\overline{A}-\gamma)}\|\le 1$ holds for all $t\geq0$.
Hence the operator $\overline{A}$ is the generator of $C_0$-semigroup  
$(e^{t\overline{A}})_{t\ge 0} = (e^{\gamma t}\cdot e^{t(\overline{A}-\gamma)})_{t\ge 0}$, 
and the condition $\|e^{t\overline{A}}\|\le e^{\gamma t}$ holds for all $t\ge 0$.
So the first statement of the theorem is proved.

2). 
It follows from theorem~\ref{teorOcVnLk} that for any $i=0,\ldots,2m+q$ there exist nonnegative constants $C_{i,0},C_{i,1},\ldots,C_{i,\lfloor(i+1)/2\rfloor}$, such that for any $f\in UC_b^{2m+\hat{q}}(\mathbb{R})$ we have
\begin{equation*} %\label{eqOcVnLk}
\|f^{(i)}\| \le \sum_{j=0}^{\lfloor(i+1)/2\rfloor} C_{i,j}\|A^j f\|.
\end{equation*}
From this and from the inequality~\eqref{eqApprDU2_1}, the relations follow:
\begin{equation*}
\bigg\|S(t)f - \sum_{k=0}^{m} \frac{t^kA^k f}{k!}\bigg\| \le t^{m+\alpha}\sum_{i=0}^{2m+q}\sum_{j=0}^{\lfloor(i+1)/2\rfloor}B_iC_{i,j}\|A^jf\|=
\end{equation*}
\begin{equation}\label{eqTeilAlfa}
= t^{m+\alpha}\sum_{j=0}^{m+\lfloor(q+1)/2\rfloor}\alpha_j\|A^jf\| = t^{m+1}\sum_{j=0}^{m+\lfloor(q+1)/2\rfloor}K_j(t)\|A^jf\|,
\end{equation}
where $\alpha_j$ is some nonnegative constant and $K_j(t)=\alpha_jt^{\alpha-1}$ for any $j=0,1,\ldots,m+\lfloor(q+1)/2\rfloor$.

Subspace $UC_b^{2m+\hat{q}}(\mathbb{R})$ is $(Q(t))_{t\geq0}$-invariant due to~\eqref{eqHC2bInvDifTt},
and is dense in $UC_b(\mathbb{R})$ due to remark~\ref{remUCnDenseUC}. 
So, taking into account item 2 of condition and proved statement~1 of this theorem, we see that all the conditions of theorem~\ref{mainth} are met with $\mathscr{D}=UC_b^{2m+\hat{q}}(\mathbb{R})$.
Then, it follows from the inequality~\eqref{ocresdiff1main} of theorem~\ref{mainth} that for any $t>0$, any $n\geq t/T$ and any $f\in UC_b^{2m+\hat{q}}$ we have
\begin{equation*} %\label{}
\|S(t/n)^n f - e^{t\overline{A}}f\|\leq \frac{Mt^{m+1}e^{wt}}{n^{m}}\sum_{j=0}^{m+\lfloor(q+1)/2\rfloor}\beta_j(t/n)\|A^{j}f\|,
\end{equation*}
where $\beta_j(t)=K_j(t)e^{-wt}\leq \alpha_jt^{\alpha-1}$ for $j\neq m+1$ and $\beta_{m+1}(t)=K_{m+1}(t)e^{-wt}+1/(m+1)! \leq \alpha_{m+1}t^{\alpha-1}+1/(m+1)!$. It is clear that if $\alpha=1$ or $t\le1$ then $\beta_{m+1}(t)\leq (\alpha_{m+1}+1/(m+1)!)t^{\alpha-1}$.
Consequently, for any $t>0$ and any integer $n\ge n_{\alpha,t}$ the following inequality is true:
\begin{equation*} %\label{}
\|S(t/n)^n f - e^{t\overline{A}}f\|\leq \frac{Mt^{m+\alpha}e^{wt}}{n^{m-1+\alpha}}\sum_{j=0}^{m+\lfloor(q+1)/2\rfloor}\gamma_j\|A^{j}f\|,
\end{equation*}
where $\gamma_j=\alpha_j$ for $j\neq m+1$ and $\gamma_{m+1}(t)= \alpha_{m+1}+1/(m+1)!$.

From this and from item 2) of lemma~\ref{lemRazlLq}, 
it follows that for some nonnegative constants $C_0,C_1,\ldots,C_{2m+\hat{q}}$ which are independent of $t$, $T$ and $n$, 
the inequality~\eqref{eqApprDU2_2} that we are proving holds:
\begin{equation*} %\label{eqApprDU2_2}
\|S(t/n)^n f - e^{t\overline{A}}f\|\leq \frac{Mt^{m+\alpha}e^{wt}}{n^{m-1+\alpha}}\sum_{i=0}^{2m+\hat{q}}C_i\|f^{(i)}\|.
\end{equation*}

3).
Equality~\eqref{eqApprDU2_3} follows from the estimate~\eqref{eqTeilAlfa},
from the relations $K_j(t)=\alpha_jt^{\alpha-1}=o(t^{-m})$ as $t\to+0$ for all $j=0,1,\ldots,m+\lfloor(q+1)/2\rfloor$, and from the equality~\eqref{eqanChernoff} in the statement~2 of the theorem~\ref{mainth}.
\end{proof}
%---------------------------------------------------------

Here is an example of using theorem~\ref{teorApprDU2} for one concrete Chernoff function, which was presented in~\cite{RemAMC2018}.

%---------------------------------------------------------
\begin{example}\label{exremizf}
Suppose that functions $a,b,c\in HC_b^2(\mathbb{R})$ are given such that $\inf_{x\in{\mathbb R}}a(x)>0$.
For each $u\in UC_b^2(\mathbb{R})$ set
\begin{equation} \label{eqExamp3} %(3)
Au = au'' + bu' + cu
\end{equation}
and for each $t\geq0$, each $f\in UC_b(\mathbb{R})$ and each $x\in\mathbb{R}$ set
\begin{equation} \label{eqExamp2} %(2)
\begin{split}
(S(t)f)(x) = \frac14 f\Big(x+2\sqrt{a(x)t}\Big) + \frac14 f\Big(x-2\sqrt{a(x)t}\Big) +\\
+ \frac12 f\big(x+2b(x)t\big) + tc(x)f(x).
\end{split}
\end{equation}
Then there exist nonnegative constants $C_0,C_1,\ldots,C_4$ such that 
for all $t>0$, all $n\in\{1,2,3,\ldots\}$ and all $f\in UC_b^4(\mathbb{R})$ the following inequality holds:
\begin{equation*} %\label{eqTeilExp}
\begin{split}
&\| S(t/n)^n f - e^{t\overline{A}}f \| \leq\\
&\leq \frac{t^2e^{\|c\|t}}{n}\big(C_0\|f\|+C_1\|f'\|+C_2\|f''\|+C_3\|f'''\|+C_4\|f^{(IV)}\|\big).
\end{split}
\end{equation*}
\end{example}
%----------------------------------------------------------
\begin{proof}
1) Set $m=1$, $q=2$. Then $\hat{q}=2$ and item 1 of the condition of the theorem~\ref{teorApprDU2} is met.

2) Let us estimate the norm $\|S(t)f\|$ for any $t>0$ and any $f\in UC_b(\mathbb{R})$ using the formula~\eqref{eqExamp2}:
$$
\|S(t)f\| \leq \frac14\sup_{x\in\mathbb{R}}\Big|f\Big(x+2\sqrt{a(x)t}\Big)\Big| + 
\frac14 \sup_{x\in\mathbb{R}}\Big|f\Big(x-2\sqrt{a(x)t}\Big)\Big| +
$$
$$
+ \frac12\sup_{x\in\mathbb{R}}\big|f\big(x+2b(x)t\big)\big| + t\sup_{x\in\mathbb{R}}|c(x)|\cdot\sup_{x\in\mathbb{R}}|f(x)| \le
$$
$$
\le \frac14\|f\| + \frac14\|f\| + \frac12\|f\| + t\|c\|\cdot\|f\| = (1+t\|c\|)\cdot\|f\| \leq e^{\|c\|t}\|f\|.
$$
So $\|S(t)\|\leq e^{\|c\|t}$ and $\|S(t)^k\|\leq e^{k\|c\|t}$ for any $t>0$ and any $k\in\{1,2,3,\ldots\}$. 
Then item 2 of the condition of the theorem~\ref{teorApprDU2} is met with $M=1$, $\sigma=\|c\|$ and any $T>0$.

3) Let's take any function $f\in UC_b^{4}(\mathbb{R})$ and expand $[S(t)f](x)$ in powers of $t>0$, 
using Taylor’s formula with remainders in Lagrange’s form.
Then we have for some real $\xi_1=\xi_1(t,x)$, $\xi_2=\xi_2(t,x)$ and $\xi_3=\xi_3(t,x)$:
$$
f\Big(x+2\sqrt{a(x)t}\Big) = f(x) + f'(x)\cdot 2\sqrt{a(x)t} + 
\frac12 f''(x)\cdot\Big(2\sqrt{a(x)t}\Big)^2 + 
$$
$$
+ \frac16 f'''(x)\cdot\Big(2\sqrt{a(x)t}\Big)^3 +
\frac{1}{24}f^{IV}(\xi_1)\cdot\Big(2\sqrt{a(x)t}\Big)^4;
$$ 
$$
f\Big(x-2\sqrt{a(x)t}\Big) = f(x) - f'(x)\cdot 2\sqrt{a(x)t} + 
\frac12 f''(x)\cdot\Big(2\sqrt{a(x)t}\Big)^2 -
$$
$$ 
- \frac16 f'''(x)\cdot\Big(2\sqrt{a(x)t}\Big)^3 +
\frac{1}{24}f^{IV}(\xi_2)\cdot\Big(2\sqrt{a(x)t}\Big)^4;
$$ 
$$
f(x+2b(x)t) = f(x) + f'(x)\cdot 2b(x)t + 
\frac12 f''(\xi_3)\cdot(2b(x)t)^2.
$$ 
Therefore, using these three equalities together with~\eqref{eqExamp2}
we get the following expression for $[S(t)f](x)$:
\begin{equation} %\label{eq2} %(2)
\begin{split}
[S(t)f](x) = 
f(x) + t[a(x)f''(x) + b(x)f'(x) +c(x)f(x)] +\\
+ t^2\Big[\frac{(a(x))^2}{6}\big(f^{IV}(\xi_1) + f^{IV}(\xi_2)\big) + (b(x))^2f''(\xi_3)\Big].
\end{split}
\end{equation}
So, taking into account the formula~\eqref{eqExamp3}, we come to the inequality
$$
\|S(t)f - (f + tAf)\| \leq t^2\Big(\frac{\|a\|^2}{3}\|f^{IV}\| + \|b\|^2\|f''\|\Big).
$$
Then last item (item 3) of the condition of the theorem~\ref{teorApprDU2} is met with $\alpha=1$.

\medskip
4) Further, using item~2 of the asserting part of the theorem~\ref{teorApprDU2}, we get that for all $t>0$,  all $n=1,2,3,\ldots$ and all $f\in UC_b^{4}(\mathbb{R})$ the estimate
\begin{equation*} %\label{eqTeilExp}
\begin{split}
&\| S(t/n)^n f - e^{t\overline{A}}f \| \leq\\
&\leq \frac{t^2e^{\|c\|t}}{n}\big(C_0\|f\|+C_1\|f'\|+C_2\|f''\|+C_3\|f'''\|+C_4\|f^{(IV)}\|\big)
\end{split}
\end{equation*}
is true for some nonnegative constants $C_0$, $C_1$, $C_2$, $C_3$, $C_4$.
\end{proof}
%---------------------------------------------------------

\textbf{Acknowledgements.} Authors are partially supported by the Laboratory of Dynamical Systems and Applications NRU HSE, and by the Ministry of Science and Higher Education of the RF grant ag.~No~075-15-2019-1931. Authors are thankful to the members of research group ``Evolution semigroups and applications'' and Professor Dmitry Turaev for the discussion of the research presented.

%%%%%%%%%%%%%%%%%%%%%%%%%%%%%%%%%%%%%%%%%%%%%%%%%%%%%%%%%%%%%%%%%%%%%%%%%%%%%%%%%%%%
%\section{Discussion and future research}

%-----------------------------------------------------------------------------------

%-----------------------------------------------------------------------------------
\end{document}